\newtheorem{theorem}{Teorema}[section]
\newtheorem{proposition}[theorem]{Proposici\'{o}n}
\newtheorem{corollary}[theorem]{Corolario}
\newtheorem{lemma}[theorem]{Lema}
\newtheorem*{theorem*}{Teorema}
\newtheorem*{proposition*}{Proposici\'{o}n}
\newtheorem*{corollary*}{Corolario}
\newtheorem*{lemma*}{Lema}
\theoremstyle{definition}
\newtheorem{definition}[theorem]{Definici\'{o}n}
\newcommand{\BbbN}{\mathbb{N}}
\newcommand{\BbbZ}{\mathbb{Z}}
\newcommand{\GKdim}{\mathrm{GKdim}}
\newcommand{\esc}[2]{\langle #1, #2 \rangle}
\newcommand{\pcomp}{\varphi}
\newcommand{\scomp}{\psi}
\newcommand{\mono}[2]{\mathbf{#1}^{#2}}
\newcommand{\peso}[1]{\pcomp (#1_1) + \scomp(#1_2)}
\newcommand{\filext}[1]{\mathfrak{#1}}
\newcommand{\cola}[2]{o_{\downarrow(#1,#2)}}
\newcommand{\sop}[1]{\mathrm{Sop}(#1)}
\newcommand{\graded}[2]{G^{#1}(#2)}
\newcommand{\blackhole}[3]{#1^{-}_{#2}(#3)}
\newcommand{\mdeg}[2]{\mathrm{mdeg}_{#1}(#2)}
\newcommand{\domain}{\Lambda}
\author{Jos\'{e} G\'{o}mez Torrecillas \\
Universidad de Granada}
\title{Re-filtraci\'{o}n y propiedades de regularidad de anillos
multi-filtrados}
\date{26 de septiembre de 2000}
\begin{document}
\maketitle

\begin{abstract}
Para un \'{a}lgebra $R$ multi-filtrada se muestra que, bajo
condiciones favorables sobre el \'{a}lgebra multi-graduada asociada
$G(R)$, es posible levantar condiciones homol\'{o}gicas como la
regularidad en el sentido de Auslander o la propiedad de
Cohen-Macaulay de $G(R)$ a $R$. Como aplicaci\'{o}n, obtenemos que las
$\mathbb{C}(q)$--\'{a}lgebras envolventes cuantizadas $U_q(C)$
definidas por DeConcini y Procesi en \cite{DeConcini/Procesi:1993}
son regulares Auslander y Cohen-Macaulay. Para obtener estos
resultados, se desarrolla una t\'{e}cnica de multi-filtraci\'{o}n para las
llamadas extensiones acotadas y un m\'{e}todo de re-filtraci\'{o}n, en
combinaci\'{o}n con resultados de McConnell y Stafford
(\cite{McConnell/Stafford:1989}) y DeConcini y Procesi (loc.
cit.).
\end{abstract}

\section{Propiedades de regularidad de \'{a}lgebras no conmutativas}

Comienzo recordando, para conveniencia del lector, algunas
nociones. Mis referencias fundamentales aqu\'{\i} son \cite{Bjork:1989}
y \cite{Levasseur:1992}.

Sea $R$ un anillo noetheriano y $M$ un m\'{o}dulo (por la izquierda o
por la derecha) finitamente generado sobre $R$. El \emph{n\'{u}mero
grado} de $M$ se define como
\[
j_A(M) = \inf \{\; i | \; \mathrm{Ext}_R^i(M,R) \neq 0 \} \in
\BbbN \cup \{ + \infty \}
\]

Diremos, por otra parte, que $M$ satisface la \emph{condici\'{o}n de
Auslander} si para todo $i \geqslant 0$ y todo subm\'{o}dulo $N$ de
$\mathrm{Ext}_R^i(M,R)$, se tiene que $j_R(N) \geqslant i$. Cuando
la dimensi\'{o}n global homol\'{o}gica de $R$ sea finita y todo m\'{o}dulo
finitamente generado satisfaga la condici\'{o}n de Auslander, diremos
que $R$ es \emph{regular Auslander}.

Si ahora el anillo $R$ es un \'{a}lgebra finitamente generada sobre un
cuerpo $\mathbf{k}$, podemos definir la dimensi\'{o}n de
Gelfand-Kirillov de cada m\'{o}dulo (ver \cite{Krause/Lenagan:2000}
para un extenso estudio de esta dimensi\'{o}n). El \'{a}lgebra $R$ se
llama \emph{Cohen-Macaulay} si
\[
\GKdim(M) + j_R(M) = \GKdim(R)
\]
para todo $R$--m\'{o}dulo finitamente generado.

La propiedad Cohen-Macaulay, en conjunci\'{o}n con la regularidad, es
interesante porque, en su presencia, es posible estudiar la
catenaridad de algunas \'{a}lgebras cuantizadas (ver
\cite{Goodearl/Lenagan:1996}) o la estructura del \'{u}ltimo punto de
la resoluci\'{o}n inyectiva minimal de los m\'{o}dulos regulares ${}_RR$ y
$R_R$ (ver \cite{Gomez/Jara/Merino:1996}).

 Seguidamente, voy a refrescar algunos resultados b\'{a}sicos sobre
ciertas \'{a}lgebras cuantizadas que nos ser\'{a}n de utilidad. Si $Q =
(q_{ij})$ es una matriz cuadrada de tama\~{n}o $s \times s$
multiplicativamente antisim\'{e}trica con coeficientes en
$\mathbf{k}$, el \emph{espacio cu\'{a}ntico asociado}
$\mathcal{O}_{Q}(\mathbf{k}^s) = \mathbf{k}_Q[x_1, \dots, x_s]$
est\'{a} generado como $\mathbf{k}$--\'{a}lgebra por variables $x_1,
\dots, x_s$ sujetas a las relaciones $x_jx_i = q_{ji}x_ix_j$. Un
hecho fundamental es que este \'{a}lgebra es una extensi\'{o}n iterada de
Ore
\[
\mathcal{O}_{Q}(\mathbf{k}^s) =
\mathbf{k}[x_1][x_2;\sigma_2]\cdots[x_s;\sigma_s],
\]
donde $\sigma_j(x_i) = q_{ji}x_i$ para $1 \leqslant i < j
\leqslant s$.

Para nuestros prop\'{o}sitos nos interesar\'{a}n algunas localizaciones
sencillas de los espacios cu\'{a}nticos. As\'{\i}, tomemos algunas de las
variables que, por simplicidad en la notaci\'{o}n, suponemos ser $x_1,
\dots x_t$ con $t \leqslant s$. Como $x_1, \dots, x_t$ son
elementos normales, el conjunto multiplicativo $S$ que generan es
un conjunto de Ore (ver \cite[Lemma 4.1]{Krause/Lenagan:2000}),
podemos considerar la localizaci\'{o}n de
$\mathcal{O}_{Q}(\mathbf{k}^s)$ en este conjunto, que denotaremos
por
\[
\mathbf{k}_{Q}[x_1^{\pm 1}, \dots, x_t^{\pm 1}, x_{t+1}, \dots,
x_{s}]
\]

El siguiente resultado es bien conocido en este \'{a}rea; no obstante,
no he encontrado una referencia c\'{o}moda, por lo que incluyo su
enunciado y una demostraci\'{o}n.

\begin{proposition}\label{grCM}
El \'{a}lgebra $A = \mathbf{k}_{Q}[x_1^{\pm 1}, \dots, x_t^{\pm 1},
x_{t+1}, \dots, x_{s}]$ es regular Auslander y Cohen-Macaulay.
\end{proposition}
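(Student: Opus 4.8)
The plan is to realize $A$ as built from the ground field $\mathbf{k}$ by a finite sequence of two kinds of elementary operations --- Ore extensions by a $\mathbf{k}$-algebra automorphism and Ore localizations at normal regular elements --- and to invoke the stability of Auslander regularity and of the Cohen--Macaulay property under each of them. Concretely, I would first treat $\mathcal{O}_{Q}(\mathbf{k}^s)=\mathbf{k}[x_1][x_2;\sigma_2]\cdots[x_s;\sigma_s]$: here every $\sigma_j$ is an automorphism and no $\sigma_j$-derivation occurs. Since $\mathbf{k}$ is trivially Auslander regular (global dimension $0$) and Cohen--Macaulay ($\GKdim\mathbf{k}=0$, every finitely generated module being finite-dimensional), an induction on the number of adjoined variables applies: at each step $R\mapsto R[x;\sigma]$ with $\sigma$ an automorphism, the ring remains Noetherian (Hilbert basis theorem for Ore extensions), the global dimension stays finite, $\GKdim$ increases by exactly $1$, and both the Auslander condition and the identity $\GKdim(M)+j_R(M)=\GKdim(R)$ are inherited; these are standard (see \cite{Bjork:1989}, \cite{Levasseur:1992}, and \cite{Krause/Lenagan:2000} for the Gelfand--Kirillov bookkeeping). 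Hence $\mathcal{O}_{Q}(\mathbf{k}^s)$ is Auslander regular and Cohen--Macaulay with $\GKdim\mathcal{O}_{Q}(\mathbf{k}^s)=s$.

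Next I would observe that the multiplicative set $S$ generated by the normal elements $x_1,\dots,x_t$ is an Ore set (as already recalled, cf.\ \cite[Lemma 4.1]{Krause/Lenagan:2000}), so that $A=S^{-1}\mathcal{O}_{Q}(\mathbf{k}^s)$ is an Ore localization of $\mathcal{O}_{Q}(\mathbf{k}^s)$. Ore localization does not raise the global dimension, so $\mathrm{gldim}\,A<\infty$; it preserves the Auslander condition (\cite{Levasseur:1992}); and, since $A$ contains the copy $\mathcal{O}_{Q}(\mathbf{k}^s)$ while $\GKdim$ cannot grow under Ore localization of an affine Noetherian algebra, one gets $\GKdim A=s$. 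It then remains to push the Cohen--Macaulay identity through the localization: for a finitely generated $A$-module $N$, write $N=S^{-1}M$ with $M$ a finitely generated $S$-torsion-free $\mathcal{O}_{Q}(\mathbf{k}^s)$-submodule; flatness of $S^{-1}(-)$ gives $\mathrm{Ext}^i_A(N,A)\cong S^{-1}\mathrm{Ext}^i_{\mathcal{O}_{Q}(\mathbf{k}^s)}(M,\mathcal{O}_{Q}(\mathbf{k}^s))$, whence $j_A(N)=j_{\mathcal{O}_{Q}(\mathbf{k}^s)}(M)$, and correspondingly $\GKdim_A(N)=\GKdim_{\mathcal{O}_{Q}(\mathbf{k}^s)}(M)$; the Cohen--Macaulay equality for $M$ then yields it for $N$.

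The step I expect to be the main obstacle is precisely this last one: verifying that $\GKdim_A(S^{-1}M)=\GKdim_{\mathcal{O}_{Q}(\mathbf{k}^s)}(M)$ for a suitable torsion-free representative. One inequality is immediate, but equality requires using that $x_1,\dots,x_t$ act on $M$ as normalizing non-zero-divisors, so that $M$ embeds GK-densely into $N$; this is where the argument has to be handled with care rather than quoted. An alternative route that avoids the localization subtleties is to present $A$ itself, from the outset, as an iterated skew-Laurent/skew-polynomial extension
\[
A\;\cong\;\mathbf{k}[x_1^{\pm 1}][x_2^{\pm 1};\sigma_2]\cdots[x_t^{\pm 1};\sigma_t][x_{t+1};\sigma_{t+1}]\cdots[x_s;\sigma_s],
\]
which is legitimate because each $\sigma_j$ carries every already-inverted variable to a unit, and then to run the same induction, invoking in addition the (equally classical) fact that a skew-Laurent extension by an automorphism also preserves Auslander regularity and the Cohen--Macaulay property while raising both the global dimension and $\GKdim$ by $1$.
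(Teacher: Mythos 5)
Your overall strategy coincides with the paper's: first establish that the quantum affine space $\mathbf{k}_Q[x_1,\dots,x_s]$ is Auslander regular and Cohen--Macaulay, then transfer both properties across the Ore localization at the multiplicative set $S$ generated by $x_1,\dots,x_t$. The first half is fine (it is exactly \cite[Theorem 3.5]{Goodearl/Lenagan:1996}, which the paper simply quotes). The genuine gap is in the localization step, and it is larger than the one you flag. You assert that flatness of $S^{-1}(-)$ together with $\mathrm{Ext}^i_A(S^{-1}M,A)\cong S^{-1}\mathrm{Ext}^i(M,\mathcal{O}_Q(\mathbf{k}^s))$ yields $j_A(S^{-1}M)=j(M)$; but that isomorphism only gives $j_A(S^{-1}M)\geqslant j(M)$, because the lowest nonvanishing $\mathrm{Ext}$ of $M$ may consist entirely of $S$-torsion and die under localization. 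Ruling this out is not a formal consequence of flatness: one needs the Auslander condition together with the Cohen--Macaulay equality on the base ring to control the GK-dimension of the torsion part. This, combined with the comparison $\GKdim_A(S^{-1}M)=\GKdim(M)$ that you correctly identify as delicate, is precisely the content of \cite[Proposition 2.1]{Ajitabh/Smith/Zhang:1999} and \cite[Theorem 2.4]{Ajitabh/Smith/Zhang:1999}, which is how the paper closes the argument (the hypothesis there being that $S$ consists of monomials in locally normal elements, which holds here since $x_1,\dots,x_t$ are normal).

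Your proposed alternative --- writing $A$ as an iterated skew-Laurent/skew-polynomial extension of $\mathbf{k}$ and invoking that a skew-Laurent extension by an automorphism preserves Auslander regularity and the Cohen--Macaulay property --- does not actually sidestep the difficulty: the standard proof that $R[x^{\pm 1};\sigma]$ inherits the Cohen--Macaulay property from $R[x;\sigma]$ proceeds by viewing it as the localization of $R[x;\sigma]$ at the powers of the normal element $x$, i.e.\ it is the same localization problem in the special case $t=1$, iterated. Either way you must supply or cite a localization theorem of the Ajitabh--Smith--Zhang type; once that is in place, your argument collapses onto the paper's.
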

\begin{proof}
Es claro que $A$ es una extensi\'{o}n iterada de Ore de un \'{a}lgebra de
McConnell-Pettit, luego su dimensi\'{o}n global homol\'{o}gica es finita
por \cite[3.1]{McConnell/Pettit:1988} y \cite[Theorem
4.2]{Ekstrom:1989}. Por otra parte el espacio af\'{\i}n cu\'{a}ntico
\[
\mathbf{k}_{Q}[x_1, \dots, x_t,x_{t+1}, \dots, x_s]
\]
es regular Auslander y Cohen-Macaulay (ver, por ejemplo,
\cite[Theorem 3.5]{Goodearl/Lenagan:1996}). Por \cite[Proposition
2.1]{Ajitabh/Smith/Zhang:1999}, $A$ verifica la condici\'{o}n de
Auslander. Como el conjunto multiplicativo generado por $x_1,
\dots, x_t$ est\'{a} formado por monomios que son elementos normales
locales en el espacio af\'{\i}n cu\'{a}ntico, entonces, por \cite[Theorem
2.4]{Ajitabh/Smith/Zhang:1999}, nuestra \'{a}lgebra $A$ es as\'{\i}mismo
Cohen-Macaulay.
\end{proof}

\section{\'{A}lgebras y m\'{o}dulos multi-filtrados}
En este trabajo $\BbbN^n$ denotar\'{a} el monoide libre conmutativo
con $n$ generadores $\epsilon_1, \dots, \epsilon_n$. Los elementos
de $\BbbN^n$, que llamaremos usualmente \emph{multi-\'{\i}ndices},
estar\'{a}n representados por vectores $\alpha = (\alpha_1, \dots,
\alpha_n)$ con componentes enteras no negativas, siendo la
operaci\'{o}n del monoide la adici\'{o}n usual de vectores. De esta forma,
los generadores $\epsilon_1, \dots, \epsilon_n$ son los vectores
de la base can\'{o}nica. Aunque haremos uso de monoides de diferentes
dimensiones, la notaci\'{o}n para sus generadores no variar\'{a}, ya que
el contexto no deja dudas en cada caso. El soporte $\sop{\alpha}$
de un multi-\'{\i}ndice $\alpha$ se define como el conjunto de los
\'{\i}ndices $i = 1, \dots, n$ tales que $\alpha_i \neq 0$.

\begin{definition}
Diremos que una relaci\'{o}n de orden total $\preceq$ en $\BbbN^n$ es
un \emph{orden admisible} si
\begin{enumerate}
\item Si $\alpha \preceq \beta$, entonces $\alpha + \gamma \preceq
\beta + \gamma$, para $\alpha, \beta, \gamma \in \BbbN^n$.
\item $0 \preceq \alpha$ para todo $\alpha \in \BbbN^n$.
\end{enumerate}
Usaremos la notaci\'{o}n $(\BbbN^n,\preceq)$ para representar esta
situaci\'{o}n.
\end{definition}

Una observaci\'{o}n fundamental es que, por el Lema de Dickson, todo
orden admisible hace de $\BbbN^n$ un conjunto bien ordenado.
Aunque el \'{u}nico orden admisible sobre $\BbbN$ es el usual, la
abundancia de \'{o}rdenes admisibles par $n > 1$ est\'{a} garantizada por
el hecho de que hay una cantidad no numerable de ellos.

A partir de este momento, $K$ designar\'{a} un anillo conmutativo y
$\mathbf{k}$ un cuerpo (conmutativo).

\begin{definition}
Una \emph{$(\BbbN^n,\preceq)$--filtraci\'{o}n} sobre una $K$--\'{a}lgebra
$R$ es una familia de $K$--subm\'{o}dulos $F(R) = \{ F_{\alpha}(R) ~|~
\alpha \in \BbbN^n \}$ de $R$ satisfaciendo los siguientes
axiomas:
\begin{enumerate}
\item
Si $\alpha \preceq \beta$, entonces $F_{\alpha}(R) \subseteq
F_{\beta}(R)$.
\item
Para cualesquiera $\gamma, \delta \in \BbbN^n$, se tiene
$F_{\gamma}(R)F_{\delta}(R) \subseteq F_{\gamma + \delta}(R)$.
\item
$\bigcup_{\alpha \in \BbbN^n} F_{\alpha}(R) = R$.
\item $1 \in F_0(R)$.
\end{enumerate}
Muchas veces relajaremos la notaci\'{o}n $F(R)$ y pondremos,
simplemente,  $F$.
\end{definition}

Cuando $n =1$, la anterior definici\'{o}n coincide, exactamente, con
la definici\'{o}n usual de filtraci\'{o}n positiva. Al igual que en el
caso de filtraciones, tenemos la noci\'{o}n de m\'{o}dulos multi-filtrado.

\begin{definition}
Supongamos dada una $(\BbbN^n,\preceq)$--filtraci\'{o}n $F(R)$ sobre
$R$ y sea $M$ un $R$--m\'{o}dulo por la izquierda. Una
$(\BbbN^n,\preceq)$--filtraci\'{o}n $F(M)$ sobre $M$ es una familia
$F(M) = \{ F_{\alpha}(M) ~|~ \alpha \in \BbbN^n \}$ de
$K$--subm\'{o}dulos de $M$ satisfaciendo las siguientes condiciones:
\begin{enumerate}
\item
Si $\alpha \preceq \beta$, entonces $F_{\alpha}(M) \subseteq
F_{\beta}(M)$.
\item
Para cualesquiera $\gamma, \delta \in \BbbN^n$, se tiene que
$F_{\gamma}(R)F_{\delta}(M) \subseteq F_{\gamma + \delta}(M)$.
\item $\bigcup_{\alpha \in \BbbN^n}F_{\alpha}(M) = M$.
\end{enumerate}
\end{definition}

La noci\'{o}n de \'{a}lgebra y m\'{o}dulo graduado asociado a los respectivos
objetos filtrados ser\'{a} fundamental para nosotros. La construcci\'{o}n
es bastante natural si escribimos para cada $\gamma \in \BbbN^n$,
\[
\blackhole{F}{\gamma}{M} = \bigcup_{\gamma' \prec
\gamma}F_{\gamma'}(M)
\]
para un m\'{o}dulo multi-filtrado $M$ (entendemos que
$\blackhole{F}{0}{M} = \{ 0 \}$). Consideremos el $K$--m\'{o}dulo
\[
G_{\gamma}(M) = \frac{F_{\gamma}(M)}{\blackhole{F}{\gamma}{M}}
\]
y definamos el $K$--m\'{o}dulo $\BbbN^n$--graduado
\[
G(M) = \oplus_{\gamma \in \BbbN^n}G_{\gamma}(M)
\]
Para $r + \blackhole{F}{\gamma}{R} \in G_{\gamma}(R)$ y $m +
\blackhole{F}{\delta}{M} \in G_{\delta}(M)$, definamos
\[
(r + \blackhole{F}{\gamma}{R})( m + \blackhole{F}{\delta}{M}) = rm
+ \blackhole{F}{\gamma + \delta}{M}
\]
Si $M = R$, entonces tenemos un producto en $G(R)$ que lo hace un
\'{a}lgebra $\BbbN^n$--graduada sobre $K$. Adem\'{a}s, $G(M)$ se convierte
as\'{\i} en un $G(R)$--m\'{o}dulo por la izquierda $\BbbN^n$--graduado.
Llamaremos a $G(R)$ (resp. a $G(M)$), \emph{\'{a}lgebra (resp. m\'{o}dulo)
graduado asociado}. Cuando sea necesario, subrayaremos la
dependencia de esta construcci\'{o}n con respecto de la
multi-filtraci\'{o}n usando la notaci\'{o}n $\graded{F}{R}$ o
$\graded{F}{M}$.

\'{I}ntimamente ligado a la noci\'{o}n de m\'{o}dulo graduado asociado se
encuentra el concepto de multi-grado de un elemento de $M$.
Concretamente, dado $m \in M$, llamamos \emph{multi-grado} de $0
\neq  m \in M$, notaci\'{o}n $\mdeg{F}{m}$, al m\'{\i}nimo (con respecto de
$\preceq$) de los multi-\'{\i}ndices $\alpha \in \BbbN^n$ tales que $m
\in F_{\alpha}(M)$. Es de rese\~{n}ar que $m +
\blackhole{F}{\mdeg{F}{m}}{M}$ es un elemento homog\'{e}neo de grado
$\mdeg{F}{m}$ de $G(M)$.

El primer resultado esperanzador sobre la utilidad del m\'{e}todo de
las multi-filtraciones es el siguiente (ver \cite[Theorem 1.5]
{Gomez:1999} para su (sencilla) demostraci\'{o}n).

\begin{theorem}(Teorema de la Base de Hilbert)
Sea $R$ un anillo multi-filtrado tal que $G(R)$ es noetheriano por
la izquierda. Entonces $R$ es noetheriano por la izquierda.
\end{theorem}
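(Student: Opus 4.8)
The plan is to mimic the classical symbol-calculus proof of the Hilbert basis theorem for filtered rings, replacing the induction on a natural-number degree by an appeal to the fact recorded just above --- via Dickson's lemma --- that an admissible order $\preceq$ well-orders $\BbbN^n$. Fix a left ideal $I$ of $R$. First I would equip $I$ with the \emph{induced multi-filtration} $F_\alpha(I) = I \cap F_\alpha(R)$; then $\blackhole{F}{\gamma}{I} = I \cap \blackhole{F}{\gamma}{R}$, so each $G_\gamma(I) = F_\gamma(I)/\blackhole{F}{\gamma}{I}$ injects into $G_\gamma(R)$, and in this way $G(I)$ becomes a graded left $G(R)$-submodule of $G(R)$. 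For $0 \neq r \in I$ write $\sigma(r) = r + \blackhole{F}{\mdeg{F}{r}}{R}$ for its principal symbol, a nonzero homogeneous element of $G(I)$ of degree $\mdeg{F}{r}$.

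Since $G(R)$ is left noetherian, $G(I)$ is finitely generated, and being graded it is generated by finitely many homogeneous elements, which we may take to be principal symbols $\sigma(a_1), \dots, \sigma(a_m)$ of elements $a_i \in I$; put $\alpha_i = \mdeg{F}{a_i}$. I claim $I = \sum_{i=1}^m R a_i$. Suppose not, and --- using the well-ordering of $\preceq$ --- choose $r \in I \setminus \sum_i R a_i$ with $\gamma := \mdeg{F}{r}$ minimal. In $G(R)$ the homogeneous element $\sigma(r)$ of degree $\gamma$ can be written $\sigma(r) = \sum_i \bar c_i\, \sigma(a_i)$ with each $\bar c_i$ homogeneous of degree $\gamma - \alpha_i$ (only the indices $i$ with $\gamma - \alpha_i \in \BbbN^n$ contribute). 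Lifting each $\bar c_i$ to $c_i \in F_{\gamma - \alpha_i}(R)$ and setting $r' = r - \sum_i c_i a_i$, one checks that $c_i a_i \in F_\gamma(R)$ and that the class of $c_i a_i$ in $G_\gamma(R)$ equals $\bar c_i\,\sigma(a_i)$ whether or not $\bar c_i$ vanishes --- the key point being that $\beta \prec \gamma - \alpha_i$ forces $\beta + \alpha_i \prec \gamma$, i.e. the compatibility of $\preceq$ with addition. Hence $r' \in F_\gamma(R)$ and its class in $G_\gamma(R)$ is $\sigma(r) - \sum_i \bar c_i\,\sigma(a_i) = 0$, so $r' \in \blackhole{F}{\gamma}{R}$; that is, $r' = 0$ or $\mdeg{F}{r'} \prec \gamma$.

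To finish: since $r \in I$ and each $a_i \in I$, also $r' \in I$; and if $r'$ lay in $\sum_i R a_i$, then so would $r = r' + \sum_i c_i a_i$, a contradiction. Thus either $r' = 0$, whence $r = \sum_i c_i a_i \in \sum_i R a_i$ (contradiction), or $r' \in I \setminus \sum_i R a_i$ with $\mdeg{F}{r'} \prec \gamma$, contradicting the minimality of $\gamma$. Either way we reach a contradiction, so $I$ is finitely generated and $R$ is left noetherian. The only real subtlety --- and the step I would write out most carefully --- is the passage between $R$ and $G(R)$ in that displayed computation: one must verify that dropping to lower multidegree is controlled by $\preceq$, so that the relevant products land in the correct ``black hole'' $\blackhole{F}{\cdot}{R}$, which is precisely where the two admissibility axioms ($\alpha \preceq \beta \Rightarrow \alpha + \gamma \preceq \beta + \gamma$ and $0 \preceq \alpha$) are used; everything else is the routine graded/filtered dictionary.
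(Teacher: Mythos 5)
Your argument is correct and complete: it is the standard principal-symbol descent, with the induction on degree replaced by the well-ordering of $(\BbbN^n,\preceq)$ coming from Dickson's lemma, and you correctly isolate the one delicate point (compatibility of $\preceq$ with addition, which makes the products land in the right $\blackhole{F}{\cdot}{R}$). The paper itself gives no proof --- it only cites \cite[Theorem 1.5]{Gomez:1999} and calls the argument ``sencilla'' --- and what you have written is precisely that expected argument, so there is nothing to add.
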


Otro resultado motivador para intentar dotar de multi-filtraciones
a las \'{a}lgebras de manera que el \'{a}lgebra multi-graduada asociada
sea sencilla es el siguiente (ver \cite[Theorem 2.8]{Gomez:1999}).

\begin{theorem}
Sea $M$ un m\'{o}dulo por la izquierda multi-filtrado sobre un \'{a}lgebra
multi-filtrada $R$. Supongamos que $G(R)$ es un \'{a}lgebra
finitamente generada y que $G(M)$ es un $G(R)$--m\'{o}dulo por la
izquierda finitamente generado. Entonces
\[
\GKdim({}_RM) \geqslant \GKdim({}_{G(R)}G(M))
\]
Si, adem\'{a}s, las multi-filtraciones son finito-dimensionales,
entonces
\[
\GKdim({}_RM) = \GKdim({}_{G(R)}G(M))
\]
\end{theorem}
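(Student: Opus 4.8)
The plan is to estimate the two growth functions directly, using a common system of generators obtained by lifting homogeneous generators of the graded objects. Throughout, write $|\alpha| = \alpha_1 + \dots + \alpha_n$ for $\alpha \in \BbbN^n$.

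\textbf{Setup.} Since $G(R)$ is a finitely generated $\mathbf{k}$--algebra, choose homogeneous generators $g_1, \dots, g_m$ with $g_i \in G_{\gamma_i}(R)$; since $G(M)$ is a finitely generated $G(R)$--module, choose homogeneous generators $h_1, \dots, h_p$ with $h_j \in G_{\delta_j}(M)$. Lift each $g_i$ to $a_i \in F_{\gamma_i}(R)$ with $\mdeg{F}{a_i} = \gamma_i$ and each $h_j$ to $u_j \in F_{\delta_j}(M)$ with $\mdeg{F}{u_j} = \delta_j$. A transfinite induction on the well order $\preceq$ shows that $a_1, \dots, a_m$ generate $R$ as a $\mathbf{k}$--algebra and $u_1, \dots, u_p$ generate $M$ as an $R$--module: if $0 \neq x$ has multidegree $\alpha$, write a homogeneous expression for the symbol $x + \blackhole{F}{\alpha}{M} \in G_\alpha(M)$ in the $g_i$ and $h_j$, substitute the $a_i$ and $u_j$, and subtract; the difference lies in $\blackhole{F}{\alpha}{M}$, hence has multidegree $\prec \alpha$. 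In particular $R$ and $M$ are finitely generated, so $\GKdim({}_RM)$ and $\GKdim({}_{G(R)}G(M))$ are defined, and putting $V = \mathbf{k}1 + \sum_i \mathbf{k}a_i$, $W = \sum_j \mathbf{k}u_j$, $V_G = \mathbf{k}1 + \sum_i \mathbf{k}g_i$, $W_G = \sum_j \mathbf{k}h_j$, one has $\GKdim({}_RM) = \limsup_N \log_N \dim_{\mathbf{k}}(V^N W)$ and $\GKdim({}_{G(R)}G(M)) = \limsup_N \log_N \dim_{\mathbf{k}}(V_G^N W_G)$.

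\textbf{The inequality $\geq$.} I claim $\dim_{\mathbf{k}}(V^N W) \geq \dim_{\mathbf{k}}(V_G^N W_G)$ for every $N$. For a finite-dimensional $\mathbf{k}$--subspace $U$ of $M$ and $\beta \in \BbbN^n$, let $L_\beta(U) \subseteq G_\beta(M)$ be the span of the symbols of those $0 \neq x \in U$ with $\mdeg{F}{x} = \beta$. Intersecting $U$ with the spaces $F_\beta(M)$ and $\blackhole{F}{\beta}{M}$ over the finitely many multidegrees occurring in $U$, and using that $\preceq$ is a well order, produces a finite chain of subspaces of $U$ whose successive quotients are exactly the $L_\beta(U)$; hence $\dim_{\mathbf{k}}(U) = \sum_\beta \dim_{\mathbf{k}}(L_\beta(U))$. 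Apply this to $U = V^N W$. On the graded side, $V_G^N W_G$ is graded and its component of degree $\beta$ is spanned by the monomials $g_{i_1}\cdots g_{i_s}h_j$ of multidegree $\beta$ with $s \leq N$; such a monomial, when nonzero, is the symbol of the element $a_{i_1}\cdots a_{i_s}u_j$ of $V^N W$, and is $0$ otherwise, so the degree-$\beta$ component of $V_G^N W_G$ lies in $L_\beta(V^N W)$. Summing over $\beta$ and taking $\limsup_N \log_N$ gives the first inequality; notice that finiteness of the filtrations is not used here.

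\textbf{The reverse inequality for finite-dimensional filtrations.} Let $H_N \subseteq \BbbN^n$ be the finite set of multidegrees $\gamma_{i_1}+\dots+\gamma_{i_s}+\delta_j$ with $s \leq N$; then $V^N W \subseteq \sum_{\eta \in H_N}F_\eta(M)$ and $\max_{\eta \in H_N}|\eta| = O(N)$. Choosing a $\mathbf{k}$--linear section of each projection $F_\beta(M) \to G_\beta(M)$ and invoking the well order once more realizes $M$ as an internal direct sum $\bigoplus_\beta P_\beta$ of $\mathbf{k}$--spaces with $P_\beta \cong G_\beta(M)$ and $F_\eta(M) = \bigoplus_{\beta \preceq \eta}P_\beta$; since the filtration is finite-dimensional, each $G_\beta(M)$ is finite-dimensional and $\dim_{\mathbf{k}}(\sum_{\eta \in H_N}F_\eta(M)) = \sum_{\beta \in B_N}\dim_{\mathbf{k}}G_\beta(M)$ with $B_N = \bigcup_{\eta \in H_N}\{\beta : \beta \preceq \eta\}$. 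The decisive point --- where finite-dimensionality is indispensable --- is that the sets $\{\beta : \beta \preceq \eta\} \cap S$, with $S = \{\beta : G_\beta(M) \neq 0\} \subseteq \langle\gamma_1,\dots,\gamma_m\rangle + \{\delta_1,\dots,\delta_p\}$, are finite, and that this, together with the compatibility of $\preceq$ with addition, forces $|\beta| \leq C|\eta|$ for $\beta \in S$ with $\beta \preceq \eta$ and a fixed constant $C$ (a positive weight for $\preceq$ on this submonoid, in the spirit of Robbiano's classification of admissible orders, achieves this). Thus $|\beta| = O(N)$ on $B_N \cap S$. Finally, for $\beta$ with $|\beta| \leq CN$ every element of $G_\beta(M)$ is a $\mathbf{k}$--combination of monomials $g_{i_1}\cdots g_{i_s}h_j$ with $s = O(N)$: the factors of nonzero multidegree are at most $|\beta|$ in number, while each run of degree-$0$ factors can be shortened because $G_0(R) = F_0(R)$ is a finite-dimensional algebra. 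Hence $\sum_{\beta \in B_N}\dim_{\mathbf{k}}G_\beta(M) \leq \dim_{\mathbf{k}}(V_G^{C'N}W_G)$, so $\dim_{\mathbf{k}}(V^N W) \leq \dim_{\mathbf{k}}(V_G^{C'N}W_G)$; as $m \mapsto \dim_{\mathbf{k}}(V_G^m W_G)$ is nondecreasing, the Gelfand-Kirillov dimension is unaffected by the reparametrization $N \mapsto C'N$ (see \cite{Krause/Lenagan:2000}), and $\GKdim({}_RM) \leq \GKdim({}_{G(R)}G(M))$ follows, giving equality.

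\textbf{Main obstacle.} Two ingredients --- the lifting of generators and the multidegree filtration of a finite-dimensional subspace --- are routine bookkeeping over the well-ordered monoid $(\BbbN^n, \preceq)$. The genuine work is the order-theoretic estimate in the third step: that finite-dimensionality of the filtration makes the lower cones on the support finite and hence, through the monoid structure, controls $|\beta|$ linearly in $|\eta|$, together with the resulting bound on word lengths. This is exactly the ``re-filtration'' phenomenon --- under finite-dimensionality the $\BbbN^n$--filtered growth can be recast as the growth of a single $\BbbN$--filtration, reducing the comparison with $G(M)$ to a standard degree count in a finitely generated graded algebra.
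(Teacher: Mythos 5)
The paper itself does not prove this theorem (it is quoted from \cite[Theorem 2.8]{Gomez:1999}), so your argument has to stand on its own. Your setup is sound: the lifting of homogeneous generators by transfinite induction on the well order, the identity $\dim_{\mathbf{k}}(U)=\sum_{\beta}\dim_{\mathbf{k}}L_{\beta}(U)$ for a finite-dimensional subspace, and the entire proof of $\GKdim({}_RM)\geqslant\GKdim({}_{G(R)}G(M))$ are correct and complete, as is the decomposition $F_{\eta}(M)=\bigoplus_{\beta\preceq\eta}P_{\beta}$ and the shortening of runs of degree-zero factors.

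The gap is in the reverse inequality, exactly at the step you yourself single out as decisive: the bound $|\beta|\leqslant C|\eta|$ for $\beta$ in the support $S$ with $\beta\preceq\eta$, $\eta\in H_N$. The justification offered --- ``a positive weight for $\preceq$ on this submonoid'' --- cannot work as stated: take $\preceq$ lexicographic on $\BbbN^2$ with generators of degrees $(1,0)$ and $(0,1)$; the submonoid is all of $\BbbN^2$ and no strictly positive weight vector is compatible with $\preceq$ on it. Nor is the bound a purely order-theoretic consequence of the finiteness of the lower cones of $S$: the set $S=\{(k,k!)\ :\ k\geqslant 1\}$ has finite lexicographic lower cones, yet $\max\{|\beta|\ :\ \beta\in S,\ \beta\preceq (c,0)\}$ grows like $(c-1)!$. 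What actually saves the statement is that $S$ is the support of a finitely generated graded module over a finitely generated graded algebra: in the lexicographic case, finite-dimensionality forces the subalgebra generated by the generators of degree $(0,*)$ to have finite support, each maximal run of such factors in a nonzero word is then uniformly bounded, and only this confines $S$ to a strip $\{\beta\ :\ \beta_2\leqslant C\beta_1+C'\}$ from which the linear bound follows; for a general admissible order on $\BbbN^n$ this must be organized as an induction through Robbiano's presentation of $\preceq$ by a chain of weight vectors. That estimate is precisely the re-filtration phenomenon of \cite{Bueso/Gomez/Lobillo:1999unp} which the present paper invokes elsewhere (proof of Theorem \ref{eneauno}); as written, your proof assumes it rather than proves it. Everything downstream of it (the count $\sum_{\beta\in B_N}\dim_{\mathbf{k}}G_{\beta}(M)\leqslant\dim_{\mathbf{k}}(V_G^{C'N}W_G)$ and the reparametrization $N\mapsto C'N$) is fine.
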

Este \'{u}ltimo resultado es fundamental para obtener la exactitud de
la dimensi\'{o}n de Gelfand-Kirillov en una amplia clase de \'{a}lgebras
(ver \cite[Theorem 2.10]{Gomez:1999}).

\section{Multi-filtraciones y extensiones acotadas}

Comenzamos estudiando un m\'{e}todo de multi-filtraci\'{o}n para ciertas
extensiones de \'{a}lgebras.

Dados elementos $x_1, \dots, x_s$ de un anillo $R$ y un
\emph{multi-\'{\i}ndice} de componentes enteras no negativas $\gamma =
(\gamma_1,\dots,\gamma_s) \in \BbbN^s$, usaremos la notaci\'{o}n
$\mono{x}{\gamma} = x_1^{\gamma_1}\cdots x_s^{\gamma_s}$ y
llamaremos a estos elementos \emph{monomios est\'{a}ndar} en $x_1,
\dots, x_s$.

 Consideremos monoides ordenados $(\BbbN^m, \preceq_1)$,
$(\BbbN^s, \preceq_2)$ y $(\BbbN^n, \preceq)$, con $\preceq_1,
\preceq_2$ y $\preceq$ \'{o}rdenes admisibles, y morfismos de monoides
ordenados $\pcomp : \BbbN^m \rightarrow \BbbN^n$ y $\scomp :
\BbbN^s \rightarrow \BbbN^n$. Sea $A \subseteq B$ una extensi\'{o}n de
$K$--\'{a}lgebras, para $K$ un anillo conmutativo verificando las
siguientes condiciones:
\begin{enumerate}[({EA}1)]
\item\label{EA1} La $K$--\'{a}lgebra $A$ est\'{a} dotada de una
$(\BbbN^m, \preceq_1)$--filtraci\'{o}n $F = \{F_{\alpha}(A) ~|~ \alpha
\in \BbbN^m \}$.
\item\label{EA2} La $K$--\'{a}lgebra $B$ est\'{a} generada por $A$ junto con una
cantidad finita de elementos $x_1, \dots, x_s \in B$.
\item\label{EA3} Para cada $i = 1, \dots, s$ y cada $\alpha \in \BbbN^m$ se
tiene que
\[x_i F_{\alpha}(A) \subseteq F_{\alpha}(A)x_i +
\sum_{\peso{\gamma} \prec \pcomp(\alpha) + \scomp (\epsilon_i)}
F_{\gamma_1}(A)\mono{x}{\gamma_2}
\]
\item\label{EA4}
Para cada $1 \leqslant i < j \leqslant s$ existe $q_{ji} \in
F_{0}(A)$ tal que
\[ x_jx_i - q_{ji}x_ix_j \in \sum_{\peso{\gamma} \prec \scomp
(\epsilon_i + \epsilon_j)}F_{\gamma_1}(A)\mono{x}{\gamma_2}
\]
\end{enumerate}

\begin{definition}\label{acotada}
Diremos que la $B$ es una \emph{extensi\'{o}n
$(\pcomp,\scomp)$--acotada por la izquierda} del \'{a}lgebra
multi-filtrada $A$ cuando las condiciones (EA1), (EA2), (EA3) y
(EA4) son satisfechas.
\end{definition}

En la Definici\'{o}n \ref{acotada} se admite el caso $m = 0$,
entendiendo que $\BbbN^0$ es el semigrupo trivial y la filtraci\'{o}n
sobre $A$ es trivial (si se quiere, $A$ no se supone en tal caso
dotado de filtraci\'{o}n alguna). En tal caso, $\pcomp = 0$.
Destacaremos dos casos particulares de extensiones
$(\pcomp,\scomp)$--acotadas por la izquierda.

\begin{definition}
Supongamos $n = m+s$ en la Definici\'{o}n \ref{acotada}, y
pongamos $\pcomp : \BbbN^m \rightarrow \BbbN^{m+s}$ y $\scomp :
\BbbN^s \rightarrow \BbbN^{m+s}$ definidas por $\pcomp(\alpha) =
(\alpha,0)$, $\scomp(\beta) = (0,\beta)$. Si definimos $\preceq$
como
\[
(\alpha_1,\alpha_2) \preceq (\beta_1,\beta_2) \Leftrightarrow
\begin{cases}
\alpha_2 \prec_2 \beta_2 \\
or \\
\alpha_2 = \beta_2 \mbox{ and } \alpha_1 \preceq_1 \beta_1
\end{cases}
\]
entonces las condiciones (EA3) y (EA4) se escriben entonces como
\\ 1. Para cada $i = 1, \dots, n$ y cada $\alpha \in \BbbN^m$ se tiene que
\[
x_iF_{\alpha}(A) \subseteq F_{\alpha}(A)x_i + \sum_{\gamma \prec_2
\epsilon_i}A\mono{x}{\gamma}
\]
2. Para cada $1 \leqslant i < j \leqslant n$ existe $q_{ji} \in
A$ tal que
\[
x_jx_i - q_{ji}x_ix_j \in \sum_{\gamma \prec_2 \epsilon_i +
\epsilon_j}A\mono{x}{\gamma}
\]
La extensi\'{o}n $B$ se llamar\'{a} una \emph{extensi\'{o}n
$\preceq$--acotada por la izquierda} de $A$. En el caso de que
$q_{ji} \in U(A)$, el grupo de unidades de $A$, para todo $1
\leqslant i < j \leqslant s$, diremos que $B$ es una extensi\'{o}n
\emph{cu\'{a}ntica} $\preceq$--acotada de $A$. Observemos que si
$A \subseteq Cen(B)$, el centro de $B$, (por ejemplo, si $A = K$),
entonces la \'{u}nica condici\'{o}n relevante es la segunda.
\end{definition}

\begin{definition}
Si tomamos $m = 0$ y $n= 1$ en la Definici\'{o}n \ref{acotada},
entonces $\scomp$ proporciona el vector $\mathbf{w} = (w_1, \dots,
w_s) \in \BbbN^s$ dado por $w_i = \scomp(\epsilon_i)$ para $i = 1,
\dots s$. En tal caso, $\pcomp = \esc{\mathbf{w}}{-}$, donde
$\esc{-}{-}$ denota el producto escalar usual, y, para un monomio
est\'{a}ndar $\mono{x}{\gamma}$, $\scomp(\gamma) =
\esc{\mathbf{w}}{\gamma} = w_1\gamma_1 + \cdots + w_s\gamma_s$ es
su grado total $\mathbf{w}$--ponderado. Las condiciones (EA3) y
(EA4) se escriben entonces como
\\1. Para cada $i = 1, \dots, s$ se tiene que
\[
x_iA \subseteq Ax_i + \sum_{\esc{\mathbf{w}}{\gamma} <
w_i}A\mono{x}{\gamma}
\]
2. Para cada $1 \leqslant i < j \leqslant s$ existe $q_{ji} \in
A$ tal que
\[
x_jx_i - q_{ji}x_ix_j \in \sum_{\esc{\mathbf{w}}{\gamma} < w_i +
w_j }A\mono{x}{\gamma}
\]
Observemos que el papel del orden $\preceq_2$ se torna, en este
caso, un tanto irrelevante puesto que, dado cualquier $\mathbf{w}
\in \BbbN^s$, siempre podemos escoger $\preceq_2 =
\preceq_{\mathbf{w}}$ y $\scomp = \esc{\mathbf{w}}{-}$. En este
caso diremos que $B$ es una extensi\'{o}n \emph{$\mathbf{w}$--acotada
por la izquierda} de $A$.
\end{definition}

Nuestro primer objetivo es demostrar la siguiente proposici\'{o}n.

\begin{proposition}\label{esfiltracion}
Sea $A \subseteq B$ una extensi\'{o}n $(\pcomp,\scomp)$--acotada. Para
cada $\alpha \in \BbbN^n$, definamos el $K$--subm\'{o}dulo de $B$
\[
\filext{F}_{\alpha}(B) = \sum_{\peso{\alpha} \preceq
\alpha}F_{\alpha_1}(A)\mono{x}{\alpha_2}
\]
Entonces $\mathfrak{F} = \{ \filext{F}_{\alpha}(B) ~|~ \alpha \in
\BbbN^n \}$ es una $(\BbbN^n, \preceq)$--filtraci\'{o}n sobre $B$.
\end{proposition}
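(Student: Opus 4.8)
El plan es comprobar los cuatro axiomas de la definición de $(\BbbN^n,\preceq)$--filtración uno por uno, siendo el único no trivial el de multiplicatividad. Los axiomas (1) y (4) son inmediatos a partir de la forma misma de $\filext{F}_{\alpha}(B)$: si $\alpha\preceq\beta$, todo sumando $F_{\alpha_1}(A)\mono{x}{\alpha_2}$ admitido en $\filext{F}_{\alpha}(B)$ está también admitido en $\filext{F}_{\beta}(B)$, y como $1\in F_0(A)=F_0(A)\mono{x}{0}$ y $\pcomp(0)+\scomp(0)=0\preceq 0$, se tiene $1\in\filext{F}_0(B)$. Admitido el axioma (2) (multiplicatividad), el axioma (3) se sigue con facilidad: $B$ está generada sobre $K$ por $A$ y $x_1,\dots,x_s$; cada $a\in F_{\alpha_1}(A)$ está en $\filext{F}_{\pcomp(\alpha_1)}(B)$ y cada $x_i=\mono{x}{\epsilon_i}$ está en $\filext{F}_{\scomp(\epsilon_i)}(B)$, de modo que por (2) todo producto de tales elementos --y por tanto todo elemento de $B$-- cae en algún $\filext{F}_{\alpha}(B)$; la inclusión recíproca $\filext{F}_{\alpha}(B)\subseteq B$ es obvia.

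Queda probar que $\filext{F}_{\gamma}(B)\filext{F}_{\delta}(B)\subseteq\filext{F}_{\gamma+\delta}(B)$. Por $K$--bilinealidad basta considerar productos de generadores $\big(F_{\gamma_1}(A)\mono{x}{\gamma_2}\big)\big(F_{\delta_1}(A)\mono{x}{\delta_2}\big)$ con $\peso{\gamma}\preceq\gamma$ y $\peso{\delta}\preceq\delta$. Como $\pcomp$ y $\scomp$ son aditivas y $\preceq$ es compatible con la suma de $\BbbN^n$, se tiene $\pcomp(\gamma_1+\delta_1)+\scomp(\gamma_2+\delta_2)=\big(\peso{\gamma}\big)+\big(\peso{\delta}\big)\preceq\gamma+\delta$; así que, por la monotonía ya establecida, basta ver que tal producto está en $\filext{F}_{\pcomp(\gamma_1+\delta_1)+\scomp(\gamma_2+\delta_2)}(B)$. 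Un elemento genérico del producto es una \emph{palabra} $a\,\mono{x}{\gamma_2}\,b\,\mono{x}{\delta_2}$ con $a\in F_{\gamma_1}(A)$ y $b\in F_{\delta_1}(A)$; le asociamos el \emph{peso} $\pcomp(\gamma_1)+\scomp(\gamma_2)+\pcomp(\delta_1)+\scomp(\delta_2)\in\BbbN^n$, que es exactamente el índice al que apuntamos. La Proposición se reduce entonces a un enunciado de forma normal: \emph{todo producto $w=z_1\cdots z_N$ en $B$ cuyos factores son elementos de ciertos $F_{\beta}(A)$ y ciertos $x_i$ está en $\filext{F}_{p(w)}(B)$, donde $p(w)\in\BbbN^n$ es la suma de los $\pcomp(\beta)$ sobre los factores que están en $A$ más los $\scomp(\epsilon_i)$ sobre los factores iguales a $x_i$.}

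Probaría este enunciado por inducción sobre $p(w)$ --lícita pues $(\BbbN^n,\preceq)$ está bien ordenado por el Lema de Dickson-- y, para un valor fijo de $p(w)$, mediante una doble contabilidad sobre la forma de la palabra. Primero se lleva $w$ a \emph{forma limpia} $c\cdot(\text{producto de las }x_i)$, es decir, con todos los factores de $A$ reunidos a la izquierda: mientras algún $x_i$ esté inmediatamente seguido por un factor de $A$, se reescribe esa adyacencia por (EA3); el término principal mantiene un factor de $A$ en el mismo $F_{\beta}(A)$ seguido de $x_i$, deja $p(w)$ invariante y baja estrictamente el número de pares ``una letra $x$ delante de una letra $A$ posterior'', mientras que los términos restantes de (EA3), de la forma $F_{\zeta_1}(A)\mono{x}{\zeta_2}$ con $\peso{\zeta}\prec\pcomp(\beta)+\scomp(\epsilon_i)$, dan palabras de peso estrictamente menor, ya cubiertas por la inducción externa (obsérvese que toda palabra de peso $\prec p(w)$ está, por hipótesis de inducción, en $\filext{F}_{p(w)}(B)$ vía la monotonía). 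Una vez $w=c\cdot(x_{p_1}\cdots x_{p_N})$, se ordena la cadena de las $x_i$: si $p_k>p_{k+1}$ para algún $k$ (adyacente), se reescribe $x_{p_k}x_{p_{k+1}}$ por (EA4) y acto seguido se empuja el coeficiente $q_{p_kp_{k+1}}\in F_0(A)$ recién producido hacia la izquierda a través de $x_{p_1},\dots,x_{p_{k-1}}$ usando (EA3) de nuevo; como dicho coeficiente está en $F_0(A)$ y aporta $\pcomp(0)=0$ al peso, el efecto neto de este macro--movimiento es otra palabra en forma limpia, con el mismo peso pero con una inversión menos en la cadena de las $x_i$, más una suma de palabras de peso estrictamente menor (los restos de (EA4) y de (EA3)), una vez más cubiertas por la inducción externa. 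Tras finitos movimientos de este tipo, $w$ ha quedado reescrita, módulo palabras de peso estrictamente menor, como una $K$--combinación de generadores estándar $F_{\eta_1}(A)\mono{x}{\eta_2}$ de peso $p(w)$; por definición éstos están en $\filext{F}_{p(w)}(B)$, y las palabras de peso menor también, completando la inducción.

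El principal obstáculo es precisamente la terminación de esta reescritura. Las reglas (EA3) y (EA4) están hechas a medida para que todo término de ``error'' baje estrictamente el peso y quede así absorbido por la inducción bien fundada sobre $(\BbbN^n,\preceq)$; la dificultad genuina es que los movimientos principales, que preservan el peso, deben aún probarse terminantes, y aquí una complejidad ingenua (por ejemplo, el número de adyacencias fuera de orden) puede no decrecer, porque al reescribir $x_jx_i$ por (EA4) se deposita un coeficiente de $A$, el $q_{ji}$, en medio de la palabra, creando nuevas inversiones ``$x$ delante de $A$''. El recurso que lo hace funcionar es plegar la limpieza por (EA3) dentro del paso (EA4) como un único macro--movimiento, de modo que siempre se regrese a forma limpia: entonces los únicos parámetros combinatorios que sobreviven son el número de pares ``$x$ delante de $A$'' (que controla el paso a forma limpia) y el número de inversiones de la cadena de las $x_i$ (que controla la ordenación), cada uno de los cuales decrece estrictamente bajo el movimiento que preserva el peso correspondiente. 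Fijar bien este orden de las reducciones es la clave; el resto es contabilidad rutinaria con la aditividad de $\pcomp,\scomp$ y la invariancia por traslación de $\preceq$.
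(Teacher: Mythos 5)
Tu propuesta es correcta, pero organiza el argumento de forma genuinamente distinta a la del art\'{\i}culo. El art\'{\i}culo a\'{\i}sla la multiplicatividad en un enunciado sobre dos factores (Lema \ref{conmutador}): si $r$ y $s$ se escriben como un monomio est\'{a}ndar dominante m\'{a}s una cola estrictamente menor para un orden admisible auxiliar $\preceq'$ sobre $\BbbN^s \times \BbbN^n$ (que compara primero el peso en $\BbbN^n$ y despu\'{e}s el exponente mediante $\preceq_2$), entonces $rs$ tiene la misma forma con t\'{e}rmino dominante $t^{\alpha_1+\beta_1}\mono{x}{\alpha_2+\beta_2}$; esto se demuestra tomando un contraejemplo minimal respecto de $\preceq'$ y enderezando $\mono{x}{\alpha_2}\mono{x}{\beta_2}$ mediante un an\'{a}lisis en tres casos sobre los soportes. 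T\'{u}, en cambio, trabajas con palabras arbitrarias en letras de $A$ y letras $x_i$, haces inducci\'{o}n s\'{o}lo sobre el peso total en $(\BbbN^n,\preceq)$ (todos los t\'{e}rminos de error de (EA3) y (EA4) bajan estrictamente el peso) y controlas los movimientos principales, que preservan el peso, con medidas combinatorias expl\'{\i}citas: el n\'{u}mero de pares <<$x$ delante de $A$>> para la fase de limpieza y el n\'{u}mero de inversiones de la cadena de las $x_i$ para la ordenaci\'{o}n, con la re-limpieza posterior a (EA4) plegada en un macro-movimiento, que es justamente el punto donde una medida ingenua fallar\'{\i}a y que identificas correctamente. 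Tu ruta evita por completo el orden auxiliar $\preceq'$ y la contabilidad de t\'{e}rminos dominantes, y demuestra exactamente lo que afirma la Proposici\'{o}n; lo que el lema del art\'{\i}culo compra a cambio es la identificaci\'{o}n del t\'{e}rmino dominante de un producto, que se reutiliza impl\'{\i}citamente justo despu\'{e}s (la igualdad \eqref{monogr} y el apartado 1 del Teorema \ref{relacionesgr}) para describir $\graded{\filext{F}}{B}$; con tu argumento habr\'{\i}a que extraer esa informaci\'{o}n adicional aparte, aunque se sigue de tu reescritura, puesto que cada movimiento principal conserva el monomio est\'{a}ndar dominante. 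El tratamiento de los restantes axiomas y la deducci\'{o}n de la exhaustividad a partir de la multiplicatividad coinciden con los del art\'{\i}culo.
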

\begin{proof}
De la misma definici\'{o}n de $\mathfrak{F}$ se sigue que
\begin{equation}
\filext{F}_{\alpha}(B) \subseteq \filext{F}_{\beta}(B) \qquad
\textrm{si} \qquad \alpha \preceq \beta
\end{equation}
Adem\'{a}s, la inclusi\'{o}n $F_{0}(A) \subseteq \filext{F}_0(B)$
garantiza que $1 \in \filext{F}_0(B)$. Para completar una
demostraci\'{o}n de la Proposici\'{o}n \ref{esfiltracion}, hemos de
demostrar que
\begin{equation}\label{monomorfo}
\filext{F}_{\gamma}(B) \filext{F}_{\delta}(B) \subseteq
\filext{F}_{\gamma + \delta}(B)
\end{equation}
y que
\begin{equation}\label{exhaustiva}
\bigcup_{\alpha \in \BbbN^n} \filext{F}_{\alpha}(B) = B.
\end{equation}

 Para demostrar \eqref{monomorfo} y \eqref{exhaustiva} vamos a necesitar
 un lema t\'{e}cnico previo. Introducimos para ello el orden admisible $\preceq'$ sobre
$\BbbN^s \times \BbbN^n$ proporcionado por la siguiente definici\'{o}n
\begin{equation}
(\lambda, \nu) \preceq' (\lambda', \nu') \Leftrightarrow
\begin{cases}
\nu \prec \nu' & \\ \mathrm{\'{o}} & \\ \nu = \nu' \qquad \mathrm{y}
\qquad \lambda \preceq_2 \lambda'
\end{cases}
\end{equation}
para $(\lambda, \nu), (\lambda', \nu') \in \BbbN^s \times
\BbbN^n$.

La notaci\'{o}n $a^{\beta}$ indicar\'{a} un elemento de $A$ que pertenece
a $F_{\beta}(A)$, para $\beta \in
\BbbN^m$.

\begin{lemma}\label{conmutador}
 Dados $(\alpha_1, \alpha_2), (\beta_1,\beta_2)
\in
\BbbN^m \times
\BbbN^s$ escribamos $\alpha = \peso{\alpha}$ y $\beta =
\peso{\beta}$. Para cualesquiera elementos
\[
r = r^{\alpha_1}\mono{x}{\alpha_2} + \sum_{(\mu_2,\peso{\mu})
\prec' (\alpha_2, \alpha)}r^{\mu_1}\mono{x}{\mu_2} \]
 y
\[
s = s^{\beta_1}\mono{x}{\beta_2} + \sum_{(\nu_2,\peso{\nu}) \prec'
(\beta_2,\beta)}s^{\nu_1}\mono{x}{\nu_2}
\]
entonces
\[
rs = t^{\alpha_1 + \beta_1}\mono{x}{\alpha_2 + \beta_2} +
\sum_{(\omega_2,\peso{\omega}) \prec' (\alpha_2 + \beta_2, \alpha
+ \beta)} t^{\omega_1}\mono{x}{\omega_2}.
\]
\end{lemma}

\textit{Demostraci\'{o}n del lema.} Antes que nada, introduzcamos una
notaci\'{o}n adecuada: dado $\lambda = \peso{\lambda}$ para alg\'{u}n
$(\lambda_1,\lambda_2) \in \BbbN^m \times \BbbN^s$, el s\'{\i}mbolo
$\cola{\lambda_2}{\lambda}$ denotar\'{a} un elemento de la forma
$\sum_{(\rho_2, \peso{\rho}) \prec' (\lambda_2,
\lambda)}u^{\rho_1}\mono{x}{\rho_2}$ (entendemos que $\cola{0}{0}
= 0$). Con esta notaci\'{o}n, lo que pretendemos demostrar es que
\begin{equation}\label{cola0}
(r^{\alpha_1}\mono{x}{\alpha_2} +
\cola{\alpha_2}{\alpha})(s^{\beta_1}\mono{x}{\beta_2} +
\cola{\beta_2}{\beta}) = t^{\alpha_1 + \beta_1}\mono{x}{\alpha_2 +
\beta_2} + \cola{\alpha_2 + \beta_2}{\alpha + \beta}
\end{equation}

Sea $\Gamma$ el conjunto de los pares, $(\alpha_1, \alpha_2),
(\beta_1, \beta_2)
\in
\BbbN^m \times \BbbN^s$ tales que existen elementos $r =
r^{\alpha_1}\mono{x}{\alpha_2} + \cola{\alpha_2}{\alpha}$ y $s =
s^{\beta_1}\mono{x}{\beta_2} + \cola{\beta_2}{\beta}$ que no
satisfacen la igualdad \eqref{cola0}. Por supuesto, queremos
demostrar que $\Gamma$ es un conjunto vac\'{\i}o. Razonando por
reducci\'{o}n al absurdo, supongamos que $\Gamma$ fuese no vac\'{\i}o y
elijamos $(\alpha_1, \alpha_2), (\beta_1, \beta_2) \in \Gamma$
tales que $(\alpha_2, \alpha) + (\beta_2, \beta)$ es m\'{\i}nimo con
respecto de $\preceq'$, donde $\alpha = \peso{\alpha}$ y $\beta =
\peso{\beta}$, y sean $r,s \in B$ elementos que no satisfacen
\eqref{cola0}. La contradicci\'{o}n que deduciremos es que, despu\'{e}s de
todo, $r, s$ s\'{\i} satisfacen \eqref{cola0}. Si $(\alpha_2, \alpha) +
(\beta_2,\beta) = (0, 0)$, entonces
\[
rs = (r^{\alpha_1} + \cola{0}{0})(s^{\beta_1} + \cola{0}{0}) =
r^{\alpha_1}s^{\beta_1} = t^{\alpha_1 + \beta_1}
\]
por ser $\{ F_{\gamma}(A) ~|~ \gamma \in \BbbN^m \}$ una
multi-filtraci\'{o}n, lo que entra en contradicci\'{o}n con la elecci\'{o}n de
$(\alpha_1,\alpha_2), (\beta_1, \beta_2)$. Por tanto,
$(\alpha_2,\alpha) + (\beta_2,\beta) \neq (0,0)$ y, por
minimalidad, tenemos que para cualesquiera $(\gamma_1, \gamma_2),
(\delta_1, \delta_2)
\in
\BbbN^m \times \BbbN^s$ tales que $(\gamma_2, \gamma) + (\delta_2,
\delta) \prec' (\alpha_2, \alpha) + (\beta_2, \beta)$ con $\gamma
= \peso{\gamma}$ y $\delta = \peso{\delta}$ se tiene que
\begin{equation}\label{(a)}
(r^{\gamma_1}\mono{x}{\gamma_2} +
\cola{\gamma_2}{\gamma})(s^{\delta_1}\mono{x}{\delta_2} +
\cola{\delta_2}{\delta}) = t^{\gamma_1 +
\gamma_2}\mono{x}{\gamma_2 + \delta_2} + \cola{\gamma_2 +
\delta_2}{\gamma + \delta}
\end{equation}
Afirmamos que, entonces, siempre que $(\gamma_2,\gamma) +
(\delta_2,\delta) \preceq' (\alpha_2,\alpha) + (\beta_2,\beta)$,
se tiene
\begin{align}
\cola{\gamma_2}{\gamma}s^{\delta_1}\mono{x}{\delta_2} &=
\cola{\gamma_2 + \delta_2}{\gamma + \delta}
,\label{(b)}
\\ r^{\gamma_1}\mono{x}{\gamma_2}\cola{\delta_2}{\delta} &=
\cola{\gamma_2 + \delta_2}{\gamma + \delta} 
, \label{(c)}
\\ \cola{\gamma_2}{\gamma} \cola{\delta_2}{\delta} &=
\cola{\gamma_2 + \delta_2}{\gamma + \delta}
. \label{(d)}
\end{align}
En efecto, demostremos \eqref{(b)}. Por definici\'{o}n,
\begin{equation}\label{cola1}
\cola{\gamma_2}{\gamma} = \sum_{(\lambda_2,\peso{\lambda}) \prec'
(\gamma_2,\gamma)}a^{\lambda_1}\mono{x}{\lambda_2}.
\end{equation}
Sea $(\mu_2, \mu) \in \BbbN^s \times \BbbN^n$ el m\'{a}ximo, con
respecto de $\preceq'$ del conjunto finito cuyos elementos son los
de la forma $(\lambda_2, \peso{\lambda})$ con
$a^{\lambda_1}\mono{x}{\lambda_2} \neq 0$. Entonces la ecuaci\'{o}n
\eqref{cola1} se puede reescribir como
\begin{equation}\label{cola2}
\cola{\gamma_2}{\gamma} = \sum_{(\lambda_2, \peso{\lambda}) =
(\mu_2, \mu)} a^{\lambda_1}\mono{x}{\lambda_2} + \sum_{(\lambda_2,
\peso{\lambda}) \prec' (\mu_2, \mu)}
a^{\lambda_1}\mono{x}{\lambda_2}
\end{equation}
El primer sumando del segundo miembro en \eqref{cola2} es de la
forma
\[
(\sum_{\pcomp(\lambda_1) + \scomp(\mu_2) = \mu}
a^{\lambda_1})\mono{x}{\mu_2},
\]
luego, tomando $\mu_1$ el m\'{a}ximo con respecto de $\preceq_1$ de
los $\lambda_1$ que aparecen en dicha suma, tenemos que
$b^{\mu_1} =\sum_{\pcomp(\lambda_1) + \scomp(\mu_2) = \mu}
a^{\lambda_1} \in F_{\mu_1}(A)$ y, por tanto,
\begin{equation}\label{cola3}
\cola{\gamma_2}{\gamma} = b^{\mu_1}\mono{x}{\mu_2} +
\cola{\mu_2}{\mu}
\end{equation}
con $(\mu_2, \mu) \prec' (\gamma_2,\gamma)$. De aqu\'{\i},
\begin{equation*}
(\mu_2,\mu) + (\delta_2,\delta) \prec' (\gamma_2,\gamma) +
(\delta_2, \delta) \preceq' (\alpha_2,\alpha) + (\beta_2, \beta)
\end{equation*}
Por tanto, en vista de \eqref{(a)} y \eqref{cola3}, tenemos
\begin{equation*}
\cola{\gamma_2}{\gamma}s^{\delta_1}\mono{x}{\delta_2} =
t^{\mu_1}\mono{x}{\mu_2} + \cola{\mu_2 + \delta_2}{\mu + \delta} =
\cola{\gamma_2 + \delta_2}{\gamma + \delta}
\end{equation*}
Las demostraciones de \eqref{(c)} y \eqref{(d)} se pueden
construir an\'{a}logamente.

De \eqref{(b)}, \eqref{(c)} y \eqref{(d)} deducimos que
\begin{alignat*}{2}
rs &= (r^{\alpha_1}\mono{x}{\alpha_2} + \cola{\alpha_2}{\alpha}
)(s^{\beta_1}\mono{x}{\beta_2} + \cola{\beta_2}{\beta})
\\ &=
r^{\alpha_1}\mono{x}{\alpha_2}s^{\beta_1}\mono{x}{\beta_2} +
r^{\alpha_1}\cola{\beta_2}{\beta} +
\cola{\alpha_2}{\alpha}s^{\beta_1}\mono{x}{\beta_2} +
\cola{\alpha_2}{\alpha}\cola{\beta_2}{\beta}
\\ &= r^{\alpha_1}\mono{x}{\alpha_2}s^{\beta_1}\mono{x}{\beta_2} +
\cola{\alpha_2 + \beta_2}{\alpha + \beta} + \cola{\alpha_2 +
\beta_2}{\alpha + \beta} + \cola{\alpha_2 + \beta_2}{\alpha +
\beta}
\\ &= r^{\alpha_1}\mono{x}{\alpha_2}s^{\beta_1}\mono{x}{\beta_2} +
\cola{\alpha_2 + \beta_2}{\alpha + \beta}
\end{alignat*}
Esto muestra que para obtener que $r, s$ satisfacen \eqref{cola0},
podemos suponer sin p\'{e}rdida de generalidad que $r =
r^{\alpha_1}\mono{x}{\alpha_2}$ y $s =
s^{\beta_1}\mono{x}{\beta_2}$. Desde luego, si $\alpha_2 = 0$, no
tenemos nada que demostrar, as\'{\i} que supongamos que $\alpha_2 \neq
0$. En este caso, $\mono{x}{\alpha_2} = x_i\mono{x}{\alpha_2 -
\epsilon_i}$. Entonces

\begin{alignat*}{2}
\mono{x}{\alpha_2}s^{\beta_1} &= x_i\mono{x}{\alpha_2 -
\epsilon_i}s^{\beta_1}
\\ &= x_i(t^{\beta_1}\mono{x}{\alpha_2 - \epsilon_i} +
\cola{\alpha_2 - \epsilon_i}{\pcomp(\beta_1) + \scomp(\alpha_2 -
\epsilon_i})) & \qquad (\textrm{por \eqref{(a)}})
\\ &= x_it^{\beta_1}\mono{x}{\alpha_2 - \epsilon_i} +
x_i\cola{\alpha_2 - \epsilon_i}{\pcomp(\beta_1) + \scomp(\alpha_2
- \epsilon_i)}
\\ &= x_it^{\beta_1}\mono{x}{\alpha_2 - \epsilon_i} +
\cola{\alpha_2}{\pcomp(\beta_1) + \scomp(\alpha_2)} & \qquad
(\textrm{por \eqref{(c)}})
\\ &= (q^{\beta_1}x_i + \cola{\epsilon_i}{\pcomp(\beta_1) +
\scomp(\epsilon_i)})\mono{x}{\alpha_2 - \epsilon_i} +
\cola{\alpha_2}{\pcomp(\beta_1) + \scomp(\alpha_2)} & \qquad
(\textrm{por (EA3)})
\\ &= q^{\beta_1}x_i\mono{x}{\alpha_2 - \epsilon_i} +
\cola{\alpha_2}{\pcomp(\beta_1) + \scomp(\alpha_2)} & \qquad
(\textrm{por \eqref{(b)}})
\\ &= q^{\beta_1}\mono{x}{\alpha_2} +
\cola{\alpha_2}{\pcomp(\beta_1) + \scomp(\alpha_2)}
\end{alignat*}

De esta forma,
\begin{alignat*}{2}
r^{\alpha_1}\mono{x}{\alpha_2}s^{\beta_1}\mono{x}{\beta_2} &=
r^{\alpha_1}(q^{\beta_1}\mono{x}{\alpha_2} +
\cola{\alpha_2}{\pcomp(\beta_1) +
\scomp(\alpha_2)})\mono{x}{\beta_2}
\\ &= r^{\alpha_1}q^{\beta_1}\mono{x}{\alpha_2}\mono{x}{\beta_2} +
r^{\alpha_1}(\cola{\alpha_2}{\pcomp(\beta_1) +
\scomp(\alpha_2)}\mono{x}{\beta_2})
\\ &= s^{\alpha_1 + \beta_1}\mono{x}{\alpha_2}\mono{x}{\beta_2} +
r^{\alpha_1}\cola{\alpha_2 + \beta_2}{\pcomp(\beta_1) +
\scomp(\alpha_2 + \beta_2)} & \qquad (\textrm{por \eqref{(b)}})
\\ &= s^{\alpha_1 + \beta_1}\mono{x}{\alpha_2}\mono{x}{\beta_2} +
\cola{\alpha_2 + \beta_2}{\pcomp(\alpha_1 + \beta_1) +
\scomp(\alpha_2 + \beta_2)} & \qquad (\textrm{por \eqref{(c)}})
\end{alignat*}
Escribamos la igualdad reci\'{e}n obtenida para futura referencia
\begin{equation}\label{cola4}
r^{\alpha_1}\mono{x}{\alpha_2}s^{\beta_1}\mono{x}{\beta_2}  =
s^{\alpha_1 + \beta_1}\mono{x}{\alpha_2}\mono{x}{\beta_2} +
\cola{\alpha_2 + \beta_2}{\pcomp(\alpha_1 + \beta_1) +
\scomp(\alpha_2 + \beta_2)}
\end{equation}
El tramo final de esta demostraci\'{o}n consiste en obtener una
<<representaci\'{o}n est\'{a}ndar>> de
$\mono{x}{\alpha_2}\mono{x}{\beta_2}$, para lo que procedemos como
sigue: Podemos suponer, sin p\'{e}rdida de generalidad, que tanto
$\alpha_2$ como $\beta_2$ son multi-\'{\i}ndices no nulos, entonces
$\sop{\alpha_2}$ y $\sop{\beta_2}$ son subconjuntos no vac\'{\i}os de
$\{1, \dots, n \}$, conjunto \'{e}ste que consideramos ordenado con el
orden natural. Sea $h$ (resp. $i$) el m\'{\i}nimo de $\sop{\alpha_2}$
(resp. $\sop{\beta_2}$), y sea $j$ (resp. $k$) el m\'{a}ximo de
$\sop{\alpha_2}$ (resp. $\sop{\beta_2}$). Vamos a distinguir tres
casos.

\textbf{Caso $h \leq i$:} Entonces
\begin{alignat*}{2}
\mono{x}{\alpha_2}\mono{x}{\beta_2} &=  x_h\mono{x}{\alpha_2 -
\epsilon_h}\mono{x}{\beta}
\\ &= x_h(t^{0}\mono{x}{\alpha_2 - \epsilon_h + \beta_2} +
\cola{\alpha_2 - \epsilon_h + \beta_2}{\scomp(\alpha_2 -
\epsilon_h + \beta_2)}) & \qquad (\textrm{por \eqref{(a)}})
\\ &= x_ht^{0}\mono{x}{\alpha_2 - \epsilon_h + \beta_2} +
\cola{\alpha_2 + \beta_2}{\scomp(\alpha_2 + \beta_2)} & \qquad
(\textrm{por \eqref{(c)}})
\\ &= (q^{0}x_h +
\cola{\epsilon_h}{\scomp(\epsilon_h)})\mono{x}{\alpha_2 -
\epsilon_h + \beta_2} + \cola{\alpha_2 + \beta_2}{\scomp(\alpha_2
+ \beta_2)} & \qquad (\textrm{por (EA3)})
\\ &= q^{0}x_h\mono{x}{\alpha_2 + \beta_2} + \cola{\alpha_2 +
\beta_2}{\scomp(\alpha_2 + \beta_2)} & \qquad (\textrm{por
\eqref{(b)}})
\\ &= q^0\mono{x}{\alpha_2 + \beta_2} + \cola{\alpha_2 +
\beta_2}{\scomp(\alpha_2 + \beta_2)}
\end{alignat*}

\textbf{Caso $j \leqslant k$:} Aqu\'{\i},
\begin{alignat*}{2}
\mono{x}{\alpha_2}\mono{x}{\beta_2} &=
\mono{x}{\alpha_2}\mono{x}{\beta_2 - \epsilon_k}x_k
\\ &= (t^0\mono{x}{\alpha_2 + \beta_2 - \epsilon_k} +
\cola{\alpha_2 + \beta_2 - \epsilon_k}{\scomp(\alpha_2 + \beta_2
- \epsilon_k)})x_k & \qquad (\textrm{por \eqref{(a)}})
\\ &= t^0\mono{x}{\alpha_2 + \beta_2 - \epsilon_k}x_k +
\cola{\alpha_2 + \beta_2}{\scomp(\alpha_2 + \beta_2)} & \qquad
(\textrm{por \eqref{(b)}})
\\ &= t^o\mono{x}{\alpha_2 + \beta_2} + \cola{\alpha_2 +
\beta_2}{\scomp(\alpha_2 + \beta_2)}
\end{alignat*}

\textbf{Caso $h > i$ y $j > k$:} Observemos que, necesariamente,
$i < j$. Procedemos como sigue:
\begin{alignat*}{2}
\mono{x}{\alpha_2}\mono{x}{\beta_2} &= \mono{x}{\alpha_2 -
\epsilon_j}x_jx_i\mono{x}{\beta_2 - \epsilon_i}
\\ &= \mono{x}{\alpha_2 - \epsilon_j}(q_{ji}x_ix_j +
\cola{\epsilon_i + \epsilon_j}{\scomp(\epsilon_i +
\epsilon_j)})\mono{x}{\beta_2 - \epsilon_i} & \qquad (\textrm{por
(EA4)})
\\ &= \mono{x}{\alpha_2 - \epsilon_j}q_{ji}x_ix_j\mono{x}{\beta_2
- \epsilon_i} + \mono{x}{\alpha_2 - \epsilon_j}\cola{\epsilon_i +
\epsilon_j}{\scomp(\epsilon_i + \epsilon_j)}\mono{x}{\beta_2 -
\epsilon_i}
\\ &= (q^0\mono{x}{\alpha_2 - \epsilon_j} + \cola{\alpha_2 -
\epsilon_j}{\scomp(\alpha_2 - \epsilon_j)})x_ix_j\mono{x}{\beta_2
- \epsilon_i}
\\ &\hspace{10em}+ \mono{x}{\alpha_2 - \epsilon_j}\cola{\epsilon_i
+ \epsilon_j}{\scomp(\epsilon_i + \epsilon_j)}\mono{x}{\beta_2 -
\epsilon_i} & \qquad (\textrm{por (EA3)})
\\ &= q^0\mono{x}{\alpha_2-\epsilon_j}x_ix_j\mono{x}{\beta_2 -
\epsilon_i} + \cola{\alpha_2 - \epsilon_j}{\scomp(\alpha_2 -
\epsilon_j)}x_ix_j\mono{x}{\beta_2 - \epsilon_i}
\\ & \hspace{10em} + \cola{\alpha_i + \epsilon_i}{\scomp(\alpha_i +
\epsilon_i)}\mono{x}{\beta_2 - \epsilon_i} & \qquad (\textrm{por
\eqref{(c)}})
\\ &= q^0\mono{x}{\alpha_2-\epsilon_j}x_ix_j\mono{x}{\beta_2 -
\epsilon_i} + \cola{\alpha_2 - \epsilon_j}{\scomp(\alpha_2 -
\epsilon_j)}x_ix_j\mono{x}{\beta_2 - \epsilon_i}
\\ & \hspace{10em} + \cola{\alpha_2 + \beta_2}{\scomp(\alpha_2 +
\beta_2)} & \qquad (\textrm{por \eqref{(b)}})
\\ &=q^0\mono{x}{\alpha_2-\epsilon_j}x_ix_j\mono{x}{\beta_2 -
\epsilon_i} + \cola{\alpha_2 - \epsilon_j +
\epsilon_i}{\scomp(\alpha_2 - \epsilon_j +
\epsilon_i)}x_j\mono{x}{\beta_2 - \epsilon_i}
\\ & \hspace{10em} + \cola{\alpha_2 + \beta_2}{\scomp(\alpha_2 +
\beta_2)} & \qquad (\textrm{por \eqref{(b)}})
\\ &=q^0\mono{x}{\alpha_2-\epsilon_j}x_ix_j\mono{x}{\beta_2 -
\epsilon_i} + \cola{\alpha_2  + \epsilon_i}{\scomp(\alpha_2  +
\epsilon_i)}\mono{x}{\beta_2 - \epsilon_i}
\\ & \hspace{10em} + \cola{\alpha_2 + \beta_2}{\scomp(\alpha_2 +
\beta_2)} & \qquad (\textrm{por \eqref{(b)}})
\\ &=q^0\mono{x}{\alpha_2-\epsilon_j}x_ix_j\mono{x}{\beta_2 -
\epsilon_i}  + \cola{\alpha_2 + \beta_2}{\scomp(\alpha_2 +
\beta_2)} & \qquad (\textrm{por \eqref{(b)}})
\\ &= q^0(t^0\mono{x}{\alpha_2 - \epsilon_j + \epsilon_i} +
\cola{\alpha_2 - \epsilon_j + \epsilon_i}{\scomp(\alpha_2 -
\epsilon_j + \epsilon_i)})x_j\mono{x}{\beta_2 - \epsilon_i}
\\ & \hspace{10em} + \cola{\alpha_2 + \beta_2}{\scomp(\alpha_2 +
\beta_2)} & \qquad (\textrm{por \eqref{(a)}})
\\ &= u^0\mono{x}{\alpha_2 - \epsilon_j +
\epsilon_i}x_j\mono{x}{\beta_0 - \epsilon_i} + q^0\cola{\alpha_2
+ \epsilon_i}{\scomp(\alpha_2 +
\epsilon_i)}\mono{x}{\beta_2-\epsilon_i}
\\ & \hspace{10em} + \cola{\alpha_2 + \beta_2}{\scomp(\alpha_2 +
\beta_2)} & \qquad (\textrm{por \eqref{(b)}})
\\ &= u^0\mono{x}{\alpha_2 - \epsilon_j +
\epsilon_i}x_j\mono{x}{\beta_0 - \epsilon_i} + \cola{\alpha_2 +
\beta_2}{\scomp(\alpha_2 + \beta_2)} & \qquad (\textrm{por
\eqref{(c)} y \eqref{(b)}})
\\ &= u^0x_i\mono{x}{\alpha_2 - \epsilon_j}x_j\mono{x}{\beta_2- \epsilon_i} +
\cola{\alpha_2 + \beta_2}{\scomp(\alpha_2 + \beta_2)} & \qquad
(\textrm{ya que } i < j)
\\ &= u^0x_i\mono{x}{\alpha_2}\mono{x}{\beta_2 - \epsilon_i} +
\cola{\alpha_2 + \beta_2}{\scomp(\alpha_2 + \beta_2)}
\\ &= u^0x_i(s^0\mono{x}{\alpha_2 + \beta_2 - \epsilon_i} +
\cola{\alpha_2 + \beta_2 - \epsilon_i}{\scomp(\alpha_2 + \beta_2
- \epsilon_i)})
\\ & \hspace{10em} + \cola{\alpha_2 + \beta_2}{\scomp(\alpha_2 +
\beta_2)} & \qquad (\textrm{por \eqref{(a)}})
\\ &= u^0x_is^0\mono{x}{\alpha_2 + \beta_2 - \epsilon_i}
+ \cola{\alpha_2 + \beta_2}{\scomp(\alpha_2 + \beta_2)} &
\qquad(\textrm{por \eqref{(c)}})
\\ &= u^0(s^0x_i + \cola{\epsilon_i}{\scomp(\epsilon_i)}) +
\cola{\alpha_2 + \beta_2}{\scomp(\alpha_2 + \beta_2)} &
(\textrm{por (EA3)})
\\ &= u^0s^0x_i\mono{x}{\alpha_2 + \beta_2 - \epsilon_i} +
\cola{\alpha_2 + \beta_2}{\scomp(\alpha_2 + \beta_2)} & \qquad
(\textrm{por \eqref{(b)}})
\\ &= v^0\mono{x}{\alpha_2 + \beta_2} + \cola{\alpha_2 +
\beta_2}{\scomp(\alpha_2 + \beta_2)}
\end{alignat*}
Sustituyendo la igualdad obtenida en \eqref{cola4} tenemos
\begin{alignat*}{2}
r^{\alpha_1}\mono{x}{\alpha_2}s^{\beta_1}\mono{x}{\beta_2} &=
s^{\alpha_1 + \beta_1}(v^0\mono{x}{\alpha_2 + \beta_2} +
\cola{\alpha_2 + \beta_2}{\scomp(\alpha_2 + \beta_2)})
\\ & \hspace{5em} +
\cola{\alpha_2 + \beta_2}{\pcomp(\alpha_1 + \beta_1) +
\scomp(\alpha_2 + \beta_2)}
\\ &= s^{\alpha_1 + \beta_1}v^0\mono{x}{\alpha_2 + \beta_2} +
\cola{\alpha_2 + \beta_2}{\pcomp(\alpha_1 + \beta_1) +
\scomp(\alpha_2 + \beta_2)} & \qquad (\textrm{por \eqref{(c)}})
\\ &= t^{\alpha_1 + \beta_1}\mono{x}{\alpha_2 + \beta_2} +
\cola{\alpha_2 + \beta_2}{\pcomp(\alpha_1 + \beta_1)+
\scomp(\alpha_2 + \beta_2)}
\end{alignat*}
Con esto hemos deducido que los elementos $r, s$ satisfacen
\eqref{cola0}, lo que concluye la demostraci\'{o}n del lema.

\bigskip

Ahora estamos en condiciones de terminar la demostraci\'{o}n de la
Proposici\'{o}n \ref{esfiltracion}. Comencemos por la inclusi\'{o}n
\eqref{monomorfo}. Dados $r \in \filext{F}_{\gamma}(B)$ y $s \in
\filext{F}_{\delta}(B)$, tenemos que
\begin{equation*}
r = \sum_{\peso{\gamma} \preceq
\gamma}r^{\gamma_1}\mono{x}{\gamma_2}, \qquad s =
\sum_{\peso{\delta} \preceq \delta} s^{\delta_1}\mono{x}{\delta_2}
\end{equation*}
Sea
\begin{equation*}
\alpha = \max_{\preceq} \{ \peso{\gamma} ~|~
r^{\gamma_1}\mono{x}{\gamma_1} \neq 0 \}
\end{equation*}
 y
\begin{equation*}
 \beta =
\max_{\preceq} \{ \peso{\delta} ~|~
s^{\delta_1}\mono{x}{\delta_2} \neq 0 \}
\end{equation*}
Claramente, $\alpha \preceq \gamma$ y $\beta \preceq \delta$, de
donde $\alpha + \beta \preceq \gamma + \delta$. Dado que
$\filext{F}_{\alpha + \beta}(B) \subseteq \filext{F}_{\gamma +
\delta}(B)$, bastar\'{a} con demostrar que $rs \in \filext{F}_{\alpha
+ \beta}(B)$ para obtener \eqref{monomorfo}. Obviamente, el
conjunto de los $\gamma_2 \in \BbbN^s$ tales que $\peso{\gamma} =
\alpha$ y $r^{\gamma_1}\mono{x}{\gamma_2} \neq 0$ para alg\'{u}n
$\gamma_1 \in \BbbN^s$ es finito. Sea $\alpha_2 \in \BbbN^s$ su
m\'{a}ximo con respecto de $\preceq_2$. Entonces
\begin{equation}\label{jefe}
r = \sum_{\pcomp(\gamma_1) + \scomp(\alpha_2) =
\alpha}r^{\gamma_1}\mono{x}{\alpha_2} + \sum_{\peso{\gamma}
\preceq \alpha \atop \gamma_2 \prec_2
\alpha_2}r^{\gamma_1}\mono{x}{\gamma_2}
\end{equation}
Tomando $\alpha_1 \in \BbbN^s$ el m\'{a}ximo entre los $\gamma_1$
tales que $\pcomp(\gamma_1) + \scomp(\alpha_2) = \alpha$ y
$r^{\gamma_1}\mono{x}{\alpha_2} \neq 0$, la igualdad
$\eqref{jefe}$ se escribe
\begin{equation}\label{jefea}
r = r^{\alpha_1}\mono{x}{\alpha_2} + \sum_{(\gamma_2,
\peso{\gamma}) \prec' (\alpha_2,
\alpha)}r^{\gamma_1}\mono{x}{\gamma_2} \qquad (\alpha =
\peso{\alpha})
\end{equation}
An\'{a}logamente, tenemos una expresi\'{o}n
\begin{equation}\label{jefeb}
s = s^{\beta_1}\mono{x}{\beta_2} + \sum_{(\delta_2,\peso{\delta})
\prec' (\beta_2, \beta)} s^{\delta_1}\mono{x}{\delta_2} \qquad
(\beta = \peso{\beta})
\end{equation}
Las ecuaciones \eqref{jefea} y \eqref{jefeb} muestran, en vista
del Lema \ref{conmutador}, que $rs \in \filext{F}_{\alpha +
\beta}(B)$, de donde \eqref{monomorfo}.

Para concluir que $\filext{F}$ es una
$(\BbbN^n,\preceq)$--filtraci\'{o}n sobre $B$ resta por comprobar
\eqref{exhaustiva}. Como $B$ est\'{a} generado como anillo por $A$ y
$x_1, \dots, x_s$, todo elemento de $B$ es una suma de
<<monomios>> de la forma $a_1x_{i_1}a_2x_{i_2}\cdots
x_{i_t}a_{t+1}$, con $a_{j} \in A$. Sean $\mu_1, \dots, \mu_t \in
\BbbN^s$ tales que $a_j \in F_{\mu_j}(A)$. Deducimos de
\eqref{monomorfo}, que ya hemos demostrado, que
\begin{equation*}
a_1x_{i_1}a_2x_{i_2} \cdots x_{i_t}a_t \in
\filext{F}_{\pcomp(\mu_1 + \cdots + \mu_t) + \scomp(\epsilon_{i_1}
+ \cdots \epsilon_{i_t})}(B),
\end{equation*}
lo que proporciona \eqref{exhaustiva} y, a la postre, la
demostraci\'{o}n de la Proposici\'{o}n \ref{esfiltracion}.
\end{proof}

Una vez dotada la $K$--\'{a}lgebra $B$ de la multi-filtraci\'{o}n
$\filext{F}$, vamos a estudiar la relaci\'{o}n de su \'{a}lgebra graduada
asociada $\graded{\filext{F}}{B}$ con $\graded{F}{A}$. Para ello,
supondremos que el homomorfismo de monoides ordenados $\pcomp :
\BbbN^m \rightarrow \BbbN^n$ es una aplicaci\'{o}n inyectiva. Esto
permite demostrar que $\blackhole{F}{\alpha}{A} \subseteq
\blackhole{\filext{F}}{\pcomp(\alpha)}{B}$ para todo $\alpha \in
\BbbN^m$. De aqu\'{\i}, la aplicaci\'{o}n $f : \graded{F}{A} \rightarrow
\graded{\filext{F}}{B}$ definida sobre componentes homog\'{e}neas por
\[
f(a + \blackhole{F}{\alpha}{A}) = a +
\blackhole{\filext{F}}{\pcomp(\alpha)}{B} \qquad (a +
\blackhole{F}{\alpha}{A} \in G^{F}_{\alpha}(R))
\]
 es un homomorfismo
$\pcomp$--graduado de $K$--\'{a}lgebras. Por tanto, la imagen de $f$
es una sub\'{a}lgebra $\BbbN^n$--graduada de $\graded{\filext{F}}{B}$,
que denotaremos por $G(A)$, donde, para cada $\beta \in \BbbN^n$,
la componente $\beta$--homog\'{e}nea viene dada por
\[
G_{\beta}(A) = \begin{cases} \frac{F_{\alpha}(A) +
\blackhole{\filext{F}}{\beta}{B}}{\blackhole{\filext{F}}{\beta}{B}}
& \textrm{si } \beta = \pcomp(\alpha) \\ 0 & \textrm{si } \beta
\notin \pcomp(\BbbN^m)
\end{cases}
\]

Con esta notaci\'{o}n, tenemos

\begin{theorem}\label{relacionesgr}
La $K$--\'{a}lgebra $\graded{\filext{F}}{B}$ est\'{a} generada por $G(A)$
y los elementos homog\'{e}neos $y_1, \dots, y_s$, donde $y_i = x_i +
\blackhole{\filext{F}}{\scomp(\epsilon_i)}{B}$ para $i = 1,
\dots, s$. Adem\'{a}s, se verifican los siguientes enunciados:
\begin{enumerate}
\item $G_{\alpha}^{\filext{F}}(B) = \sum_{\peso{\alpha} =
\alpha}G_{\pcomp(\alpha_1)}(A)\mono{y}{\alpha_2} \qquad (\alpha
\in \BbbN^n)$,
\item $y_iG_{\pcomp(\gamma)}(A) \subseteq G_{\pcomp(\gamma)}(A)y_i \qquad (\gamma
\in \BbbN^m, i = 1, \dots, s)$,
\item $y_jy_i = q_{ji}y_iy_j \qquad (1 \leqslant i < j \leqslant
s)$.
\end{enumerate}
\end{theorem}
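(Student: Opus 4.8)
The plan is to work componentwise in the $\BbbN^n$-graded algebra $\graded{\filext{F}}{B} = \bigoplus_{\alpha} G_{\alpha}^{\filext{F}}(B)$: I would first establish item (1), and then read off the generation claim and the relations (2), (3) from it together with the axioms (EA3), (EA4). Two preliminary observations set the stage. First, since $1 \in F_0(A)$ one has $x_i = 1\cdot\mono{x}{\epsilon_i} \in \filext{F}_{\scomp(\epsilon_i)}(B)$, so each $y_i$ is homogeneous of degree $\scomp(\epsilon_i)$; iterating the inclusion \eqref{monomorfo} of Proposition~\ref{esfiltracion} and using additivity of $\scomp$ gives $\mono{x}{\beta_2}\in\filext{F}_{\scomp(\beta_2)}(B)$ for all $\beta_2\in\BbbN^s$, and then — since the product in $\graded{\filext{F}}{B}$ of the classes of $u\in\filext{F}_{\mu}(B)$ and $v\in\filext{F}_{\nu}(B)$ is the class of $uv$ in degree $\mu+\nu$ — an easy induction yields
\[
\mono{y}{\beta_2} = \mono{x}{\beta_2} + \blackhole{\filext{F}}{\scomp(\beta_2)}{B},
\]
a homogeneous element of degree $\scomp(\beta_2)$. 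Second, $F_{\gamma}(A)\subseteq\filext{F}_{\pcomp(\gamma)}(B)$ for $\gamma\in\BbbN^m$, so each class $a+\blackhole{\filext{F}}{\pcomp(\gamma)}{B}\in G_{\pcomp(\gamma)}(A)$ is homogeneous of degree $\pcomp(\gamma)$.

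For item (1), I would fix $\alpha\in\BbbN^n$. By construction $\filext{F}_{\alpha}(B)=\sum_{\peso{\beta}\preceq\alpha}F_{\beta_1}(A)\mono{x}{\beta_2}$, and each summand with $\peso{\beta}\prec\alpha$ lies in $\filext{F}_{\peso{\beta}}(B)\subseteq\blackhole{\filext{F}}{\alpha}{B}$; hence in the quotient $G_{\alpha}^{\filext{F}}(B)$ only the summands with $\peso{\beta}=\alpha$ survive. For such a pair $(\beta_1,\beta_2)$ and $a\in F_{\beta_1}(A)$, the identity
\[
(a+\blackhole{\filext{F}}{\pcomp(\beta_1)}{B})\,\mono{y}{\beta_2} = a\mono{x}{\beta_2} + \blackhole{\filext{F}}{\pcomp(\beta_1)+\scomp(\beta_2)}{B} = a\mono{x}{\beta_2} + \blackhole{\filext{F}}{\alpha}{B}
\]
shows that the image of $F_{\beta_1}(A)\mono{x}{\beta_2}$ in $G_{\alpha}^{\filext{F}}(B)$ is exactly $G_{\pcomp(\beta_1)}(A)\mono{y}{\beta_2}$. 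Summing over all $(\beta_1,\beta_2)$ with $\peso{\beta}=\alpha$ gives (1); since every homogeneous component of $\graded{\filext{F}}{B}$ is then a sum of products of elements of $G(A)$ and of the $y_i$, the algebra $\graded{\filext{F}}{B}$ is generated by $G(A)$ together with $y_1,\dots,y_s$.

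For (2), I would take $\gamma\in\BbbN^m$ and $a\in F_{\gamma}(A)$ and use (EA3) to write $x_i a = b\,x_i + w$ with $b\in F_{\gamma}(A)$ and $w\in\sum_{\peso{\delta}\prec\pcomp(\gamma)+\scomp(\epsilon_i)}F_{\delta_1}(A)\mono{x}{\delta_2}\subseteq\blackhole{\filext{F}}{\pcomp(\gamma)+\scomp(\epsilon_i)}{B}$; as $b\,x_i\in\filext{F}_{\pcomp(\gamma)+\scomp(\epsilon_i)}(B)$, in $\graded{\filext{F}}{B}$ this gives
\[
y_i\,(a+\blackhole{\filext{F}}{\pcomp(\gamma)}{B}) = x_i a + \blackhole{\filext{F}}{\pcomp(\gamma)+\scomp(\epsilon_i)}{B} = (b+\blackhole{\filext{F}}{\pcomp(\gamma)}{B})\,y_i \in G_{\pcomp(\gamma)}(A)\,y_i.
\]
For (3), $q_{ji}\in F_0(A)$ determines a homogeneous element of $G_0(A)$ of degree $0$, still written $q_{ji}$; since $\scomp(\epsilon_i)+\scomp(\epsilon_j)=\scomp(\epsilon_i+\epsilon_j)$, both $y_jy_i$ and $q_{ji}y_iy_j$ are homogeneous of degree $\scomp(\epsilon_i+\epsilon_j)$, and by (EA4)
\[
y_jy_i - q_{ji}y_iy_j = (x_jx_i - q_{ji}x_ix_j) + \blackhole{\filext{F}}{\scomp(\epsilon_i+\epsilon_j)}{B} = 0 .
\]

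The one genuinely delicate point is the passage to the quotient used to prove (1): one must check carefully that reduction modulo $\blackhole{\filext{F}}{\alpha}{B}$ annihilates precisely the terms with $\peso{\beta}\prec\alpha$ and that $\mono{y}{\beta_2}=\mono{x}{\beta_2}+\blackhole{\filext{F}}{\scomp(\beta_2)}{B}$, which is exactly where \eqref{monomorfo} of Proposition~\ref{esfiltracion} and the additivity of $\pcomp$ and $\scomp$ come in. Once (1) is in place, the relations (2) and (3) are nothing more than the transcriptions of (EA3) and (EA4) into the graded algebra.
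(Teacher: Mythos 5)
Your proposal is correct and follows essentially the same route as the paper: first the identity $\mono{y}{\beta} = \mono{x}{\beta} + \blackhole{\filext{F}}{\scomp(\beta)}{B}$, then item (1) by comparing images of the generators $F_{\beta_1}(A)\mono{x}{\beta_2}$ of $\filext{F}_{\alpha}(B)$ modulo $\blackhole{\filext{F}}{\alpha}{B}$, and finally (2) and (3) as direct transcriptions of (EA3) and (EA4) into the graded algebra. The only cosmetic difference is that you obtain both inclusions of (1) at once from the single identity for the image of $a\mono{x}{\beta_2}$, whereas the paper spells out the two inclusions separately; the content is identical.
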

\begin{proof}
Antes que nada, observemos que, para cada $\beta \in \BbbN^s$,
tenemos que
\begin{equation}\label{monogr}
\mono{y}{\beta} = \mono{x}{\beta} +
\blackhole{\filext{F}}{\scomp(\beta)}{B}
\end{equation}
1. Dado $(\alpha_1, \alpha_2) \in \BbbN^m \times \BbbN^s$,
escribamos $\alpha = \peso{\alpha}$ y tomemos $a +
\blackhole{\filext{F}}{\pcomp(\alpha_1)}{B} \in
G_{\pcomp(\alpha_1)}(A)$. Entonces, por \eqref{monogr},
\[
(a +
\blackhole{\filext{F}}{\pcomp(\alpha_1)}{B})\mono{y}{\alpha_2} =
a\mono{x}{\alpha_2} + \blackhole{\filext{F}}{\peso{\alpha}}{B},
\]
de donde $\sum_{\peso{\alpha}}
G_{\pcomp(\alpha_1)}(A)\mono{y}{\alpha_2} \subseteq
G^{\filext{F}}_{\alpha}(B)$. Para obtener la inclusi\'{o}n rec\'{\i}proca,
tomemos $h = g + \blackhole{\filext{F}}{\alpha}{B} \in
G^{\filext{F}}_{\alpha}(B)$, donde $g = \sum_{\peso{\alpha} =
\alpha}r^{\alpha_1}\mono{x}{\alpha_2}$. Si $h \neq 0$, entonces
podemos suponer que $r^{\alpha_1}\mono{x}{\alpha_2} \notin
\blackhole{\filext{F}}{\alpha}{B}$ para todos los
$(\alpha_1,\alpha_2) \in \BbbN^m \times \BbbN^s$. Esto permite
escribir
\begin{multline*}
g  = \sum_{\peso{\alpha} = \alpha}(r^{\alpha_1}\mono{x}{\alpha_2}
+ \blackhole{\filext{F}}{\alpha}{B}) \\ =  \sum_{\peso{\alpha} =
\alpha}(r^{\alpha_1} +
\blackhole{\filext{F}}{\pcomp(\alpha_1)}{B})(\mono{x}{\alpha_2} +
\blackhole{\filext{F}}{\scomp(\alpha_2)}{B}) \\ =
\sum_{\peso{\alpha} = \alpha}(r^{\alpha_1} +
\blackhole{\filext{F}}{\pcomp(\alpha_1)}{B})\mono{y}{\alpha_2},
\end{multline*}
de donde $h \in \sum_{\peso{\alpha} =
\alpha}G_{\pcomp(\alpha_1)}(A)\mono{y}{\alpha_2}$. \\

2. Usando (EA3), obtenemos
\begin{multline*}
y_i(r^{\gamma} + \blackhole{\filext{F}}{\pcomp(\gamma)}{B}) = (x_i
+ \blackhole{\filext{F}}{\scomp(\epsilon_i)}{B})(r^{\gamma} +
\blackhole{\filext{F}}{\pcomp(\gamma)}{B}) \\ = x_ir^{\gamma} +
\blackhole{\filext{F}}{\scomp(\epsilon_i) + \pcomp(\gamma)}{B} =
s^{\gamma}x_i + \blackhole{\filext{F}}{\pcomp(\gamma) +
\scomp(\epsilon_i)}{B} \\ = (s^{\gamma} +
\blackhole{\filext{F}}{\pcomp(\gamma)}{B})(x_i +
\blackhole{\filext{F}}{\scomp(\epsilon_i)}{B}) = (s^{\gamma} +
\blackhole{\filext{F}}{\pcomp(\gamma)}{B})x_i
\end{multline*}

3. Usando (EA4), tenemos
\begin{multline*}
y_jy_i = (x_j +
\blackhole{\filext{F}}{\scomp(\epsilon_j)}{B})(x_i +
\blackhole{\filext{F}}{\scomp(\epsilon_i)}{B}) \\= x_jx_i +
\blackhole{\filext{F}}{\scomp(\epsilon_i) \scomp(\epsilon_j)}{B}
= q_{ji}x_ix_j + \blackhole{\filext{F}}{\scomp(\epsilon_i) +
\scomp(\epsilon_j)}{B} \\= q_{ji}(x_i +
\blackhole{\filext{F}}{\scomp(\epsilon_i)}{B})(x_j +
\blackhole{\filext{F}}{\scomp(\epsilon_j)}{B}) = q_{ji}y_iy_j
\end{multline*}
\end{proof}

\begin{lemma}\label{noinyl}
El homomorfismo $f : \graded{F}{A} \rightarrow
\graded{\filext{F}}{B}$ es inyectivo si y s\'{o}lo si para todo $a
\in A$ no existe ninguna expresi\'{o}n del tipo
\begin{equation}\label{noiny}
a = \sum_{\peso{\beta} \prec \pcomp(\mdeg{F}{a})}
a^{\beta_1}\mono{x}{\beta_2}
\end{equation}
\end{lemma}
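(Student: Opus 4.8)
The plan is to reduce the whole question to a single homogeneous degree, exploiting that $f$ is a $\pcomp$-graded homomorphism and that $\pcomp$ is injective. First I would note that if $z = \sum_{\gamma \in \BbbN^m} z_\gamma$ with $z_\gamma \in G^{F}_{\gamma}(A)$, then $f(z) = \sum_\gamma f(z_\gamma)$ and $f(z_\gamma)$ is homogeneous of degree $\pcomp(\gamma)$ in $\graded{\filext{F}}{B}$; since $\pcomp$ is injective these degrees are pairwise distinct, so $f(z) = 0$ forces $f(z_\gamma) = 0$ for every $\gamma$. Hence $f$ is injective if and only if $\ker f \cap G^{F}_{\gamma}(A) = 0$ for all $\gamma \in \BbbN^m$, and it is enough to describe, for each fixed $\gamma$, when a nonzero class of $G^{F}_{\gamma}(A)$ is killed by $f$.

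Next I would unwind that condition. A nonzero element of $G^{F}_{\gamma}(A)$ is a class $a + \blackhole{F}{\gamma}{A}$ with $a \in F_{\gamma}(A)$ and $a \notin \blackhole{F}{\gamma}{A}$; these two conditions together say precisely that $\mdeg{F}{a} = \gamma$ (and force $a \neq 0$). By the definition of $f$, such a class lies in $\ker f$ exactly when $a \in \blackhole{\filext{F}}{\pcomp(\gamma)}{B}$. Here I invoke Proposition \ref{esfiltracion}: $\{\filext{F}_{\nu}(B)\}$ is a $(\BbbN^n,\preceq)$-filtration, and the family $\{\filext{F}_{\nu}(B) : \nu \prec \pcomp(\gamma)\}$ is directed, since $\nu, \nu' \prec \pcomp(\gamma)$ implies $\max_{\preceq}(\nu,\nu') \prec \pcomp(\gamma)$ and $\max_{\preceq}(\nu,\nu')$ dominates both. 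Thus $a \in \blackhole{\filext{F}}{\pcomp(\gamma)}{B}$ if and only if $a \in \filext{F}_{\nu}(B)$ for a single $\nu \prec \pcomp(\gamma)$. Expanding $\filext{F}_{\nu}(B) = \sum_{\peso{\beta} \preceq \nu} F_{\beta_1}(A)\mono{x}{\beta_2}$ and absorbing $\preceq \nu$ into $\prec \pcomp(\gamma)$, membership turns into an expression $a = \sum_{\peso{\beta} \prec \pcomp(\gamma)} a^{\beta_1}\mono{x}{\beta_2}$. Conversely, given such an expression — a finite sum of terms $a^{\beta_1}\mono{x}{\beta_2} \in \filext{F}_{\peso{\beta}}(B)$ — the element $a$ lies in $\filext{F}_{\nu_0}(B)$, where $\nu_0$ is the $\preceq$-maximum of the finitely many degrees $\peso{\beta}$ that occur, and $\nu_0 \prec \pcomp(\gamma)$; so $a \in \blackhole{\filext{F}}{\pcomp(\gamma)}{B}$, while $\mdeg{F}{a} = \gamma$ makes the class $a + \blackhole{F}{\gamma}{A}$ nonzero, whence $f$ is not injective.

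Putting the two steps together, $f$ fails to be injective if and only if there is some $a \in A$ (necessarily nonzero), with $\gamma = \mdeg{F}{a}$, admitting an expression $a = \sum_{\peso{\beta} \prec \pcomp(\mdeg{F}{a})} a^{\beta_1}\mono{x}{\beta_2}$; the contrapositive is exactly the statement of the lemma. One tacitly restricts to $a \neq 0$, consistently with $\mdeg{F}{\cdot}$ being defined only on nonzero elements; for $a = 0$ there is nothing to check, the empty sum being the only candidate. I do not expect a genuine obstacle: once the grading of $f$ isolates a single degree, the rest is bookkeeping with the definitions. The one point needing a little care is the passage from $a \in \blackhole{\filext{F}}{\pcomp(\gamma)}{B}$ to membership in one $\filext{F}_{\nu}(B)$, which is precisely where Proposition \ref{esfiltracion} — and, implicitly, the injectivity of $\pcomp$, already used to guarantee $\blackhole{F}{\gamma}{A} \subseteq \blackhole{\filext{F}}{\pcomp(\gamma)}{B}$ so that $f$ is well defined — enters.
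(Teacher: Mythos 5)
Your argument is correct and follows essentially the same route as the paper's: both directions reduce to observing that a nonzero homogeneous class $a+\blackhole{F}{\mdeg{F}{a}}{A}$ lies in $\ker f$ exactly when $a\in\blackhole{\filext{F}}{\pcomp(\mdeg{F}{a})}{B}$, which unwinds to an expression of type \eqref{noiny}. You merely make explicit two points the paper leaves implicit (that the kernel of the $\pcomp$-graded map $f$ is detected on homogeneous components, and that membership in the union $\blackhole{\filext{F}}{\pcomp(\gamma)}{B}$ reduces to membership in a single $\filext{F}_{\nu}(B)$), which is harmless.
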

\begin{proof}
Supongamos que existe una expresi\'{o}n como \eqref{noiny} para alg\'{u}n
$a \in A$ y escribamos $\alpha = \mdeg{F}{a}$. Entonces $a  \in
\blackhole{\filext{F}}{\pcomp(\alpha)}{B}$, de donde $f(a +
\blackhole{F}{\alpha}{A}) = 0$. Pero $a +
\blackhole{F}{\alpha}{A} \neq 0$. Rec\'{\i}procamente, supongamos que
$f$ no es inyectivo y sea $a + \blackhole{F}{\mdeg{F}{\alpha}}{A}$
un elemento homog\'{e}neo no nulo en el n\'{u}cleo de $f$. Eso significa
que $a \in \blackhole{\filext{F}}{\pcomp(\mdeg{F}{\alpha})}{B}$, o
sea, que existe $\beta \prec \pcomp(\mdeg{F}{\alpha})$ tal que $a
\in \filext{F}_{\beta}(A)$. Esto da directamente una expresi\'{o}n de
$a$ como \eqref{noiny}.
\end{proof}

\begin{theorem}\label{bases}
Sea $B$ una extensi\'{o}n $(\pcomp,\scomp)$--acotada de $A$. Entonces
${}_AB$ es libre con base $\{ \mono{x}{\alpha} ~|~ \alpha \in
\BbbN^s \}$ si, y s\'{o}lo si, $f : \graded{F}{A} \rightarrow
\graded{\filext{F}}{B}$ es inyectivo y
${}_{\graded{F}{A}}\graded{\filext{F}}{B}$ es libre con base $\{
\mono{y}{\alpha} ~|~ \alpha \in \BbbN^s \}$.
\end{theorem}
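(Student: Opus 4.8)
El plan es reducir ambas implicaciones a una cuesti\'on de independencia lineal, ya que los sistemas de generadores est\'an asegurados: ${}_AB$ est\'a generado por $\{\mono{x}{\alpha} ~|~ \alpha\in\BbbN^s\}$ (pues por la definici\'on de $\filext{F}$ todo elemento de $B$ cae en alg\'un $\filext{F}_{\nu}(B)=\sum_{\peso{\delta}\preceq\nu}F_{\delta_1}(A)\mono{x}{\delta_2}$, como se ve al final de la demostraci\'on de la Proposici\'on \ref{esfiltracion}), y ${}_{\graded{F}{A}}\graded{\filext{F}}{B}$ est\'a generado por $\{\mono{y}{\alpha} ~|~ \alpha\in\BbbN^s\}$ (apartado~1 del Teorema \ref{relacionesgr}, que cuando $f$ es inyectivo identifica adem\'as $\graded{F}{A}$ con su imagen $G(A)$). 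As\'i que el n\'ucleo del argumento es establecer un diccionario entre relaciones $A$--lineales entre los $\mono{x}{\alpha}$ y relaciones $\graded{F}{A}$--lineales entre los $\mono{y}{\alpha}$, pasando por los t\'erminos principales respecto de $\preceq$ y usando de forma reiterada que $\pcomp$, por ser inyectivo y mon\'otono entre \'ordenes totales, \emph{refleja} el orden.

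Para la implicaci\'on directa, supongamos ${}_AB$ libre con base $\{\mono{x}{\alpha}\}$. Primero probar\'ia que $f$ es inyectivo: si no, el Lema \ref{noinyl} produce $0\neq a\in A$ que es una combinaci\'on $A$--lineal de monomios $\mono{x}{\beta_2}$ con $\peso{\beta}\prec\pcomp(\mdeg{F}{a})$; agrupando por $\beta_2$ y aislando el sumando de $\mono{x}{0}=1$ resulta una relaci\'on $A$--lineal entre los $\mono{x}{\alpha}$ cuyo coeficiente de $\mono{x}{0}$ es $a-b_0$ con $b_0\in\blackhole{F}{\mdeg{F}{a}}{A}$ (porque $\pcomp$ refleja el orden), coeficiente que no es nulo ya que $a\notin\blackhole{F}{\mdeg{F}{a}}{A}$: contradicci\'on. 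Fijada la inyectividad de $f$, una relaci\'on homog\'enea entre los $\mono{y}{\alpha}$ se escribe, usando de nuevo la inyectividad de $\pcomp$, como $\sum_{\alpha}\xi_{\alpha}\mono{y}{\alpha}=0$ en $G^{\filext{F}}_{\mu}(B)$, con $\xi_{\alpha}$ la clase de alg\'un $r_{\alpha}\in A$ de multigrado $\gamma_{\alpha}=\mdeg{F}{r_{\alpha}}$ y $\pcomp(\gamma_{\alpha})+\scomp(\alpha)=\mu$; por el apartado~1 del Teorema \ref{relacionesgr} esto equivale a $\sum_{\alpha}r_{\alpha}\mono{x}{\alpha}\in\blackhole{\filext{F}}{\mu}{B}=\sum_{\peso{\delta}\prec\mu}F_{\delta_1}(A)\mono{x}{\delta_2}$, y comparando coeficientes en la base $\{\mono{x}{\alpha}\}$ de ${}_AB$ (otra vez con la reflexi\'on del orden) se obtiene $r_{\alpha}\in\blackhole{F}{\gamma_{\alpha}}{A}$, es decir $\xi_{\alpha}=0$.

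Para el rec\'iproco, supongamos $f$ inyectivo y ${}_{\graded{F}{A}}\graded{\filext{F}}{B}$ libre con base $\{\mono{y}{\alpha}\}$. Dada una relaci\'on $\sum_{\alpha\in S}a_{\alpha}\mono{x}{\alpha}=0$ con los $\alpha$ distintos y no todos los $a_{\alpha}$ nulos, tomar\'ia $\mu=\max_{\preceq}\{\pcomp(\mdeg{F}{a_{\alpha}})+\scomp(\alpha) ~|~ a_{\alpha}\neq 0\}$ y $S'=\{\alpha\in S ~|~ a_{\alpha}\neq 0,\ \pcomp(\mdeg{F}{a_{\alpha}})+\scomp(\alpha)=\mu\}\neq\emptyset$; al mirar la clase de la relaci\'on en $G^{\filext{F}}_{\mu}(B)$ los sumandos con $\alpha\notin S'$ quedan en $\blackhole{\filext{F}}{\mu}{B}$ y desaparecen, y el Teorema \ref{relacionesgr} transforma lo restante en $\sum_{\alpha\in S'}\xi_{\alpha}\mono{y}{\alpha}=0$, con $\xi_{\alpha}=f(a_{\alpha}+\blackhole{F}{\mdeg{F}{a_{\alpha}}}{A})\neq 0$ por la inyectividad de $f$, lo que contradice la independencia de los $\mono{y}{\alpha}$. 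Por tanto los $\mono{x}{\alpha}$ son $A$--linealmente independientes y, junto con la generaci\'on, forman base de ${}_AB$.

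Espero que el tramo t\'ecnicamente m\'as costoso sea precisamente ese diccionario: identificar bien $\blackhole{\filext{F}}{\mu}{B}$ con $\sum_{\peso{\delta}\prec\mu}F_{\delta_1}(A)\mono{x}{\delta_2}$, localizar con cuidado el t\'ermino principal respecto de $\preceq$ de una relaci\'on que se anula, y manejar de manera sistem\'atica la reflexi\'on del orden v\'ia $\pcomp$ para traducir desigualdades en $\BbbN^n$ a desigualdades en $\BbbN^m$ y $\BbbN^s$. Una vez admitidos los Teoremas \ref{relacionesgr} y \ref{esfiltracion} y el Lema \ref{noinyl}, lo dem\'as es comprobaci\'on rutinaria.
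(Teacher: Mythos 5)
Tu propuesta es correcta y sigue esencialmente el mismo camino que la demostraci\'on del art\'iculo: reducir todo a la independencia lineal (los generadores vienen dados por la Proposici\'on \ref{esfiltracion} y el Teorema \ref{relacionesgr}), usar el Lema \ref{noinyl} junto con la base $\{\mono{x}{\alpha}\}$ para la inyectividad de $f$, y traducir relaciones entre los $\mono{x}{\alpha}$ y relaciones homog\'eneas entre los $\mono{y}{\alpha}$ mediante t\'erminos principales y la reflexi\'on del orden v\'ia $\pcomp$. Las \'unicas diferencias (aislar el coeficiente de $\mono{x}{0}$ en lugar de observar que la expresi\'on \eqref{noiny} solo puede involucrar $\beta_2=0$, y en el rec\'iproco contradecir la independencia de los $\mono{y}{\alpha}$ en vez de la inyectividad de $f$) son cosm\'eticas.
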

\begin{proof}
Supongamos que $B$ es un $A$--m\'{o}dulo por la izquierda libre con
base $\{ \mono{x}{\alpha} ~|~ \alpha \in \BbbN^s \}$. Si $f$ no
fuese inyectivo entonces, por el Lema \ref{noinyl}, se tendr\'{\i}a una
expresi\'{o}n como \eqref{noiny} para alg\'{u}n $a \in A$. En este caso,
la expresi\'{o}n ha de ser, necesariamente, de la forma $a =
\sum_{\pcomp(\beta_1) \prec \pcomp(\mdeg{F}{a})}a^{\beta_1}$ de
donde deducimos que $a = \sum_{\beta_1 \prec_1
\mdeg{F}{a}}a^{\beta_1} \in \blackhole{F}{\mdeg{F}{a}}{A}$, lo que
es una contradicci\'{o}n. Por tanto, $f$ es inyectiva. Por el Teorema
\ref{relacionesgr}, $\{ \mono{y}{\alpha} ~|~ \alpha \in \BbbN^s
\}$ es un conjunto de generadores homog\'{e}neos de
${}_{\graded{F}{A}}\graded{\filext{F}}{B}$. Para demostrar que es
linealmente independiente, es suficiente, en vista del Teorema
\ref{relacionesgr}, con demostrar que en toda expresi\'{o}n de la
forma
\begin{equation}\label{deplin1}
\sum_{\peso{\alpha}= \alpha}r^{\alpha_1}\mono{y}{\alpha_2} = 0
\qquad (r^{\alpha_1} \in G^F_{\alpha_1}(A))
\end{equation}
se tiene necesariamente $r^{\alpha_1} = 0$ para todo $\alpha_1$.
Escribiendo $r^{\alpha_1} = a^{\alpha_1} +
\blackhole{F}{\alpha_1}{A}$ para cada $\alpha_1$, la ecuaci\'{o}n
\eqref{deplin1} se escribe, en vista de \eqref{monogr}, como
$\sum_{\peso{\alpha} = \alpha}a^{\alpha_1}\mono{x}{\alpha_2} \in
\blackhole{\filext{F}}{\alpha}{B}$. Por tanto, existe $\beta \prec
\alpha$ tal que
\begin{equation}\label{deplin2}
\sum_{\peso{\alpha} = \alpha}a^{\alpha_1}\mono{x}{\alpha_2} =
\sum_{\peso{\beta} \preceq \beta}b^{\beta_1}\mono{x}{\beta_2}
\end{equation}
Como $\{ \mono{x}{\gamma} ~|~ \gamma \in \BbbN^s \}$ es una base
de ${}_AB$, la igualdad \eqref{deplin2} es solo posible si para
cada sumando no nulo $a^{\alpha_1}\mono{x}{\alpha_2}$ del miembro
de la izquierda existe $(\beta_1, \beta_2)$ con $\peso{\beta}
\preceq \beta$ tal que $\alpha_2 = \beta_2$ y $a^{\alpha_1} =
b^{\beta_1}$. Como
\[
\pcomp(\beta_1) + \scomp(\alpha_2) = \pcomp(\beta_1) +
\scomp(\beta_2) \preceq \beta \prec \alpha = \peso{\alpha}
\]
deducimos que $\pcomp(\beta_1) \prec \pcomp(\alpha_1)$ y, por
tanto, se tiene que $\beta_1 \prec_1 \alpha_1$. Por tanto,
$a^{\beta_1} = b^{\beta_1} \in F_{\beta_1}(A) \subseteq
\blackhole{F}{\alpha_1}{A}$ para todo $\alpha_1$ y, de aqu\'{\i},
$r^{\alpha_1} = a^{\alpha_1} + \blackhole{F}{\alpha_1}{A} = 0$
para todo $\alpha_1$. Esto demuestra que los $\mono{y}{\gamma}$
forman una base de ${}_{\graded{F}{A}}\graded{\filext{F}}{B}$.

Supongamos rec\'{\i}procamente que $f$ es una aplicaci\'{o}n inyectiva y
que $\{ \mono{y}{\gamma} ~|~ \gamma \in \BbbN^s \}$ es una base de
${}_{\graded{F}{A}}\graded{\filext{F}}{B}$. La Proposici\'{o}n
\ref{esfiltracion} implica que $\{ \mono{x}{\gamma} ~|~ \gamma \in
\BbbN^s \}$ es un sistema de generadores de $B$ en tanto que
$A$--m\'{o}dulo por la izquierda. Si no es una base, entonces existe
una expresi\'{o}n del tipo
\begin{equation}\label{deplin3}
\sum_{(\alpha_1, \alpha_2) \in F}r^{\alpha_1}\mono{x}{\alpha_2} =
0
\end{equation}
para alg\'{u}n subconjunto finito $F \subseteq \BbbN^m \times
\BbbN^s$, donde $r^{\alpha_1} \in F_{\alpha_1}(A) \setminus
\blackhole{F}{\alpha_1}{A}$. Sea $\alpha$ el m\'{a}ximo, con respecto
de $\preceq$, de $\peso{\alpha}$ cuando $(\alpha_1, \alpha_2) \in
F$. Entonces, en $\graded{\filext{F}}{B}$, \eqref{deplin3} da
\begin{equation}\label{deplin4}
\sum_{\peso{\alpha} = \alpha}(r^{\alpha_1} +
\blackhole{\filext{F}}{\pcomp(\alpha_1)}{B})\mono{y}{\alpha_2} = 0
\end{equation}
Como $\{ \mono{y}{\gamma} ~|~ \gamma \in \BbbN^s \}$ es una base
de $\graded{\filext{F}}{B}$ visto como $\graded{F}{A}$--m\'{o}dulo por
la izquierda, tenemos que $r^{\alpha_1} \in
\blackhole{\filext{F}}{\alpha_1}{B}$ para todos los $\alpha_1$
implicados en \eqref{deplin4}. De esta forma, $r^{\alpha_1} =
\sum_{\peso{\beta} \preceq \beta \prec
\alpha}a^{\beta_1}\mono{x}{\beta_2}$ lo que implica, por el Lema
\ref{noinyl}, que $f$ no es una aplicaci\'{o}n inyectiva. Esto
concluye la demostraci\'{o}n del teorema.
\end{proof}

\begin{corollary}\label{basesOre}
Supongamos $A \subseteq B$ una extensi\'{o}n
$(\pcomp,\scomp)$--acotada tal que $\{ \mono{x}{\alpha} ~|~ \alpha
\in \BbbN^s \}$ es una base de ${}_AB$. Entonces
$\graded{\filext{F}}{B}$ es una extensi\'{o}n iterada de Ore
\[
\graded{\filext{F}}{B} =
\graded{F}{A}[y_1;\sigma_1]\cdots[y_s;\sigma_s]
\]
donde los endomorfismos $\sigma_i$ $(i = 1, \dots, s)$ verifican
que $\sigma_i(G^F_{\gamma}(A)) \subseteq G^F_{\gamma}(A)$ para
todo $\gamma \in \BbbN^m$ y $\sigma_j(y_i) = q_{ji}y_i$ para $1
\leqslant i < j \leqslant s$.
\end{corollary}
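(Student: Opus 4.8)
The plan is to build the iterated Ore extension directly inside $\graded{\filext{F}}{B}$, with Theorems \ref{relacionesgr} and \ref{bases} doing essentially all the work. First I would invoke Theorem \ref{bases}: the hypothesis that $\{\mono{x}{\alpha} \mid \alpha \in \BbbN^s\}$ is a basis of ${}_AB$ yields that $f : \graded{F}{A} \to \graded{\filext{F}}{B}$ is injective and that $\graded{\filext{F}}{B}$ is free as a left $\graded{F}{A}$--module (through $f$) with basis $\{\mono{y}{\alpha} \mid \alpha \in \BbbN^s\}$. I then identify $\graded{F}{A}$ with the subalgebra $G(A) = \operatorname{Im} f$ of $\graded{\filext{F}}{B}$; under this identification $G^F_\gamma(A) = G_{\pcomp(\gamma)}(A)$ for $\gamma \in \BbbN^m$.

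For $0 \leqslant j \leqslant s$, let $R_j$ denote the $K$--subalgebra of $\graded{\filext{F}}{B}$ generated by $\graded{F}{A}$ together with $y_1, \dots, y_j$, so that $R_0 = \graded{F}{A}$ and, by Theorem \ref{relacionesgr}, $R_s = \graded{\filext{F}}{B}$. The key preliminary step is the normal--form identity
\[
R_j = \bigoplus_{\sop{\alpha} \subseteq \{1, \dots, j\}} \graded{F}{A}\,\mono{y}{\alpha} .
\]
The inclusion $\supseteq$ is obvious. For $\subseteq$ I would check that the right--hand side is already a subalgebra: relation $(2)$ of Theorem \ref{relacionesgr} moves a coefficient from $\graded{F}{A}$ leftwards past any $y_i$ while keeping it in $\graded{F}{A}$, and relation $(3)$ reorders the $y_i$ into increasing order at the cost of scalars $q_{ji} \in G_0(A) \subseteq \graded{F}{A}$; since the right--hand side contains the generators, it equals $R_j$. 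Directness is inherited from the basis $\{\mono{y}{\alpha}\}$. Grouping this decomposition by the exponent of $y_j$ then gives $R_j = \bigoplus_{k \geqslant 0} R_{j-1}\,y_j^{\,k}$ as a left $R_{j-1}$--module; in particular $1, y_j, y_j^{\,2}, \dots$ are left linearly independent over $R_{j-1}$.

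Next I would define the endomorphisms. Fix $j$. For $u \in R_{j-1}$, straightening $y_j u$ as above shows $y_j u \in R_{j-1}y_j$, and left linear independence of the powers of $y_j$ over $R_{j-1}$ provides a \emph{unique} $\sigma_j(u) \in R_{j-1}$ with $y_j u = \sigma_j(u)\, y_j$. This uniqueness makes it routine that $\sigma_j : R_{j-1} \to R_{j-1}$ is a unital $K$--algebra endomorphism: additivity is immediate, $\sigma_j(1) = 1$, and $y_j(uv) = \sigma_j(u)y_j v = \sigma_j(u)\sigma_j(v)y_j$ forces $\sigma_j(uv) = \sigma_j(u)\sigma_j(v)$. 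Read through the identification above, relation $(2)$ of Theorem \ref{relacionesgr} says precisely $\sigma_j(G^F_\gamma(A)) \subseteq G^F_\gamma(A)$ for all $\gamma \in \BbbN^m$, and relation $(3)$ says $\sigma_j(y_i) = q_{ji}y_i$ for $1 \leqslant i < j$.

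To finish, I would use the construction of the Ore extension $R_{j-1}[y_j;\sigma_j]$: since $y_j u = \sigma_j(u)y_j$ holds in $R_j$ there is a $K$--algebra homomorphism $R_{j-1}[y_j;\sigma_j] \to R_j$ that is the identity on $R_{j-1}$ and sends the variable to $y_j$; it is onto because its image contains the generators, and one--to--one because $R_j = \bigoplus_{k\geqslant 0}R_{j-1}y_j^{\,k}$ matches $R_{j-1}[y_j;\sigma_j] = \bigoplus_{k\geqslant 0}R_{j-1}y_j^{\,k}$. Hence $R_j \cong R_{j-1}[y_j;\sigma_j]$ for every $j$, and iterating from $j=1$ to $j=s$ gives $\graded{\filext{F}}{B} = R_s \cong \graded{F}{A}[y_1;\sigma_1]\cdots[y_s;\sigma_s]$ with the asserted properties. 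The main obstacle is the normal--form identity of the second paragraph; once that straightening is carried out carefully, the rest is formal bookkeeping built on Theorems \ref{relacionesgr} and \ref{bases}.
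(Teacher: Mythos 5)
Your argument is correct and is essentially the paper's own proof written out in full: the paper establishes the corollary by induction on the adjoined variables, using the relations of Theorem \ref{relacionesgr} together with the skew-polynomial recognition criterion of \cite[2.1.(iii)]{Goodearl/Letzter:1994}, which is precisely the content of your normal-form decomposition, the construction of the $\sigma_j$, and the final identification $R_j \cong R_{j-1}[y_j;\sigma_j]$. The only difference is that you prove the recognition step by hand and make explicit the (necessary, implicit) appeal to Theorem \ref{bases} for the freeness of $\graded{\filext{F}}{B}$ over $\graded{F}{A}$.
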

\begin{proof}
Es f\'{a}cil demostrarlo por inducci\'{o}n, teniendo en cuenta el Teorema
\ref{relacionesgr} y \cite[2.1.(iii)]{Goodearl/Letzter:1994}.
\end{proof}

\section{Un Teorema de Re-filtraci\'{o}n y regularidad de los grupos cu\'{a}nticos.}

Nos disponemos ahora a demostrar un teorema que permite, para
anillos multi-filtrados con multi-graduados asociados adecuados,
encontrar una nueva filtraci\'{o}n conservando el mismo anillo
graduado asociado.

\begin{theorem}\label{eneauno}
Sea $R$ una $K$--\'{a}lgebra dotada de una $(\BbbN^n,
\preceq)$--filtraci\'{o}n
\[
F = \{ F_{\alpha}(R) ~|~ \alpha \in \BbbN^n \}, \] donde $\preceq$
es un orden admisible cualquiera sobre $\BbbN^n$. Supongamos que
el \'{a}lgebra $\BbbN^n$--graduada asociada es una extensi\'{o}n iterada
de Ore
\[
\graded{F}{R} = \domain [y_1; \sigma_1] \dots [y_s; \sigma_s]
\]
donde $y_1,\dots, y_s$ son elementos homog\'{e}neos. Supongamos,
adem\'{a}s, que
\begin{enumerate}[(a)]
\item $\domain = F_0(R)$ es noetheriano por la izquierda;
\item para cada $1 \leqslant i < j \leqslant s$ existe $q_{ji} \in
\domain$ tal que $y_jy_i = q_{ji}y_iy_j$;
\item $F_{\alpha}(R)$ es un $\domain$--m\'{o}dulo por la izquierda
finitamente generado para cada $\alpha \in \BbbN^n$.
\end{enumerate}
Entonces $R$ puede ser dotada de una $\BbbN$--filtraci\'{o}n $\{ R_n
~|~ n \in \BbbN \}$ con $R_n$ verificando
\begin{enumerate}
\item $R_0 = \domain$;
\item $R_n$ es un $\domain$--m\'{o}dulo por la izquierda finitamente
generado para todo $n \in \BbbN$;
\item $gr(R) \cong \domain [y_1; \sigma_1] \dots [y_s; \sigma_s]$.
\end{enumerate}
\end{theorem}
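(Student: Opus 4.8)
The plan is to define a new $\BbbN$-filtration on $R$ by collapsing the multi-index filtration $F$ along a total-degree function adapted to the iterated Ore structure of $\graded{F}{R}$. Concretely, since $y_1, \dots, y_s$ are homogeneous of some multi-degrees $\mathbf{w}_i = \mdeg{F}{y_i} \in \BbbN^n$, I would first argue that every element of $\graded{F}{R}$ is, by the iterated Ore presentation together with hypothesis (b), a $\domain$-linear combination of standard monomials $\mono{y}{\alpha}$, and that such a monomial is homogeneous of multi-degree $w_1\alpha_1 + \cdots + w_s\alpha_s$ (reading $w_i$ additively in $\BbbN^n$). Hence $F_\alpha(R)$ is spanned over $\domain = F_0(R)$ by lifts $\mono{x}{\beta}$ of those $\mono{y}{\beta}$ with $\sum \beta_i w_i \preceq \alpha$; here $x_i \in F_{w_i}(R)$ is any preimage of $y_i$. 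This is where hypothesis (c) enters: each $F_\alpha(R)$ is a finitely generated $\domain$-module, so only finitely many $\beta$ occur.

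Next I would introduce a genuine $\BbbN$-grading by choosing a weight vector: pick $d_i = |\mathbf{w}_i| := \esc{(1,\dots,1)}{\mathbf{w}_i} \in \BbbN$ (the coordinate sum), or more safely an $\BbbN$-valued additive function $\ell$ on $\BbbN^n$ that is strictly compatible with $\preceq$ on the finitely generated submonoid generated by the $\mathbf{w}_i$, and set $R_n = \sum_{\sum \beta_i d_i \leqslant n} \domain\,\mono{x}{\beta}$. One checks directly that $R_0 = \domain$ (the only monomial of $\ell$-degree $0$ being the empty one, provided all $d_i > 0$; if some $y_i$ has $\mathbf{w}_i = 0$ it must already be a unit-type situation absorbed into $\domain$, so WLOG $d_i > 0$), that $R_n$ is a finitely generated $\domain$-module (finitely many $\beta$ with $\sum \beta_i d_i \leqslant n$, and each $F_\alpha(R)$ f.g. by (c)), that $R_n \subseteq R_{n+1}$, and that $\bigcup_n R_n = R$ since $F$ is exhaustive. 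The multiplicativity $R_m R_n \subseteq R_{m+n}$ follows from the fact that in $\graded{F}{R}$, modulo lower-degree terms, $\mono{x}{\beta}\mono{x}{\beta'}$ straightens to $\mono{x}{\beta+\beta'}$ up to a unit from $\domain$ (the $q_{ji}$), and the commutation/straightening relations in $R$ itself only produce terms whose $\mono{x}{\gamma}$-part has strictly smaller multi-degree, hence strictly smaller $\ell$-degree.

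The remaining and most delicate point is (3): $gr(R) \cong \domain[y_1;\sigma_1]\cdots[y_s;\sigma_s]$. The natural candidate map sends the class of $x_i$ in $R_{d_i}/R_{d_i - 1}$ to $y_i$ and is the identity on $\domain = R_0$. Surjectivity is immediate from the spanning statement. For injectivity I would show that the classes of the $\mono{x}{\beta}$ with $\sum \beta_i d_i = n$ are $\domain$-linearly independent in $R_n/R_{n-1}$: a relation there lifts to $\sum_{\sum\beta_i d_i = n} a_\beta \mono{x}{\beta} \in R_{n-1}$, and passing to $\graded{F}{R}$ at the top multi-degree occurring (which is $\preceq$-maximal among $\sum\beta_i \mathbf{w}_i$ with $a_\beta \ne 0$) contradicts the freeness of $\graded{F}{R} = \domain[y_1;\sigma_1]\cdots[y_s;\sigma_s]$ over $\domain$ on the standard monomials $\mono{y}{\beta}$ — unless all those $a_\beta$ lie in lower filtration of $\domain$, but $\domain = F_0(R)$ has trivial filtration, forcing $a_\beta = 0$. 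The main obstacle is precisely managing the interaction between the two gradings — the original fine $\BbbN^n$-grading of $\graded{F}{R}$ and the coarse $\BbbN$-grading of $gr(R)$ — so that ``top $\ell$-degree part'' of a product or a relation in $R$ is controlled by the corresponding statement in $\graded{F}{R}$; this requires verifying that distinct multi-degrees $\sum\beta_i\mathbf{w}_i$ with the same $\ell$-value cannot interfere destructively, which is why $\ell$ must be chosen strictly order-compatible on the relevant finitely generated submonoid (possible since $\preceq$ well-orders $\BbbN^n$ and the submonoid is finitely generated). Once that bookkeeping lemma is in place, properties (1) and (2) are routine and (3) is the isomorphism just described.
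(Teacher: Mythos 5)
Your overall strategy --- lift the $y_i$ to elements $x_i\in F_{\alpha_i}(R)$, show that the standard monomials $\mono{x}{\gamma}$ form a left $\domain$--basis of $R$, and then collapse the $\BbbN^n$--filtration to an $\BbbN$--filtration by assigning integer weights to the $x_i$ --- is exactly the paper's. But the step on which everything hinges, the choice of the weights, has a genuine gap. Your first choice $d_i=|\mathbf{w}_i|$ fails because the coordinate sum is not compatible with an arbitrary admissible order $\preceq$: for a lexicographic order the correction terms appearing in the straightening relations, which have multi-degree $\prec\alpha_i$, may well have \emph{larger} coordinate sum, which destroys both $R_mR_n\subseteq R_{m+n}$ and the computation of $gr(R)$. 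Your fallback --- an additive $\BbbN$--valued $\ell$ strictly compatible with $\preceq$ on the whole submonoid generated by the multi-degrees --- simply does not exist in general: for the lexicographic order on $\BbbN^2$ with $\epsilon_1\succ\epsilon_2$ one would need $k\,\ell(\epsilon_2)<\ell(\epsilon_1)$ for every $k$, forcing $\ell(\epsilon_2)=0$ and contradicting strictness. Finite generation of the submonoid does not help, since the submonoid itself is infinite; ``finitely generated'' is not the finiteness that saves the argument. A warning sign is that, if such an $\ell$ existed, hypotheses (a) and (c) would become essentially superfluous, whereas they are doing real work in the theorem.

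What the paper actually does is require the inequalities only on a \emph{finite} set of exponents. It first proves the relations $x_ia-\sigma_i(a)x_i=\sum_{\gamma\in\Gamma_i}a_\gamma\mono{x}{\gamma}$ and $x_jx_i-q_{ji}x_ix_j=\sum_{\gamma\in\Gamma_{ij}}a_\gamma\mono{x}{\gamma}$ with $\Gamma_i,\Gamma_{ij}\subseteq\BbbN^s$ finite, \emph{uniform in} $a$, and satisfying $\gamma M\prec\alpha_i$ (resp.\ $\prec\alpha_i+\alpha_j$), where $M$ is the matrix whose rows are the $\alpha_i$. This uniform finiteness is precisely where (a) and (c) enter: $\blackhole{F}{\alpha_i}{R}$ is a noetherian $\domain$--submodule of the finitely generated $F_{\alpha_i}(R)$, hence is contained in a single $F_\beta(R)$ with $\beta\prec\alpha_i$, which is free of finite rank on standard monomials. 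Given that finiteness, a separation lemma (\cite[Corollary 2.2]{Bueso/Gomez/Lobillo:1999unp}) applied to the finite set $C=\{0\}\cup\bigcup_i(\Gamma_i-\epsilon_i)\cup\bigcup_{i<j}(\Gamma_{ij}-\epsilon_i-\epsilon_j)\subseteq\BbbZ^s$ produces a strictly positive $\mathbf{w}\in\BbbN^s$ with $\esc{\mathbf{w}}{\gamma}<w_i$ on $\Gamma_i$ and $\esc{\mathbf{w}}{\gamma}<w_i+w_j$ on $\Gamma_{ij}$; note that this $\mathbf{w}$ lives on the exponents in $\BbbN^s$ and need not factor through any additive function on $\BbbN^n$. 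With it, $\domain\subseteq R$ becomes a $\mathbf{w}$--bounded extension, and Proposición \ref{esfiltracion} together with Corolario \ref{basesOre} deliver the filtration $R_n=\sum_{\esc{\mathbf{w}}{\gamma}\leqslant n}\domain\mono{x}{\gamma}$ and the isomorphism $gr(R)\cong\domain[y_1;\sigma_1]\cdots[y_s;\sigma_s]$ in one stroke --- this is the bookkeeping you were re-deriving by hand. So: right skeleton, but the key lemma you defer to is false as stated, and the correct substitute requires the uniform finiteness of the correction terms, which you did not establish.
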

\begin{proof}
Para cada $i = 1, \dots, s$, denotemos por $\alpha_i \in \BbbN^n$
el multi-grado de $y_i$. Es claro el conjunto $\{\mono{y}{\gamma}
~|~ \gamma \in \BbbN^s \}$ es una base de $\graded{F}{R}$ como
$\domain$--m\'{o}dulo por la izquierda. As\'{\i}, dado $r \in R$, el
elemento homog\'{e}neo $r + \blackhole{F}{\mdeg{}{r}}{R} \in
\graded{F}{R}$ tiene una representaci\'{o}n como polinomio est\'{a}ndar
homog\'{e}neo en los elementos homog\'{e}neos $y_1, \dots, y_s$ con
coeficientes por la izquierda en $\domain$. As\'{\i},
\begin{equation}\label{homog}
r + \blackhole{F}{\mdeg{}{r}}{R} = \sum_{\gamma_1\alpha_1 + \cdots
+ \gamma_s\alpha_s = \mdeg{}{r}}c_{\gamma}\mono{y}{\gamma},
\end{equation}
donde los $c_{\gamma}$ pertenecen a $\domain$. Tomemos, para cada
$i = 1, \dots, s$, un elemento $x_i \in F_{\alpha_i}(R)$ tal que
$y _i = x_i + \blackhole{F}{\alpha_i}{R}$ y denotemos por $M$ la
matriz de tama\~{n}o $s \times n$ cuyas filas son $\alpha_1, \dots,
\alpha_s$. La ecuaci\'{o}n \eqref{homog} se escribe
\begin{equation}\label{neweq1}
r + \blackhole{F}{\mdeg{}{r}}{R} = \sum_{\gamma M =
\mdeg{}{r}}c_{\gamma}\mono{x}{\gamma} +
\blackhole{F}{\mdeg{}{r}}{R}
\end{equation}
Como consecuencia, podemos demostrar por inducci\'{o}n sobre
$\mdeg{}{r}$ que
\begin{equation}\label{neweq2}
r = \sum_{\gamma M \preceq \mdeg{}{r}}a_{\gamma}\mono{x}{\gamma},
\end{equation}
donde $a_{\gamma} \in \domain$. Para deducir que $\{
\mono{x}{\gamma} ~|~ \gamma \in \BbbN^s \}$ es una base de $R$
como $\domain$--m\'{o}dulo por la izquierda, s\'{o}lo hemos de comprobar
la independencia lineal. Supongamos una relaci\'{o}n
\begin{equation}\label{ld}
\sum_{\gamma M \preceq \alpha} a_{\gamma}\mono{x}{\gamma} = 0,
\end{equation}
y hagamos inducci\'{o}n sobre $\alpha$. La relaci\'{o}n \eqref{ld} se
puede escribir
\begin{equation}\label{ld2}
\sum_{\gamma M = \alpha} a_{\gamma}\mono{x}{\gamma} + \sum_{\gamma
M \prec \alpha} a_{\gamma}\mono{x}{\gamma} = 0
\end{equation}
lo que, en $\graded{F}{R}$ da
\[
\sum_{\gamma M = \alpha} a_{\gamma}\mono{y}{\gamma} = 0
\]
Como los monomios $\mono{y}{\gamma}$ son $\domain$--linealmente
independientes, tenemos que $a_{\gamma} = 0$ para $\gamma M =
\alpha$. El resto de los coeficientes son nulos por inducci\'{o}n en
vista de \eqref{ld2}.

Sean ahora $a \in \domain$ e $i \in \{1, \dots, s \}$. Como
$G_0^{F}(R) = F_0(R) = \domain$, resulta de $y_ia =
\sigma_i(a)y_i$ que $\sigma_i(a)$ es de grado $0$, es decir,
$\sigma_i(a) \in \domain$. Escribiremos $a^{(i)} = \sigma_i(a)$.
Entonces
\begin{equation}\label{conmui}
0 = y_ia - a^{(i)}y_i = (x_ia - a^{(i)}x_i) +
\blackhole{F}{\alpha_i}{R}
\end{equation}
Como $\domain$ es noetheriano por la izquierda y $F_{\alpha_i}(R)$
es $\domain$--finitamente generado, tenemos que
$\blackhole{F}{\alpha_i}{R}$ noetheriano como $\domain$--m\'{o}dulo
por la izquierda. Por tanto deducimos de \eqref{conmui}, en
concurrencia con \eqref{neweq2}, que
\begin{equation}\label{neweq3}
x_i a = a^{(i)}x_i + \sum_{\gamma  \in
\Gamma_i}a_{\gamma}\mono{x}{\gamma},
\end{equation}
para ciertos $a_{\gamma} \in \domain$, donde $\Gamma_i$ es un
subconjunto finito de $\BbbN^s$ tal que $\gamma M \prec \alpha_i$
para todo $\gamma \in \Gamma_i$. Por otro lado, para $1 \leqslant
i < j \leqslant s$, tenemos
\begin{multline*}
0 = y_jy_i - q´_{ji}y_iy_j \\ = (x_j +
\blackhole{F}{\alpha_j}{R})(x_i + \blackhole{F}{\alpha_i}{R}) -
q_{ji}(x_i + \blackhole{F}{\alpha_i}{R})(x_j + \blackhole{F}{\alpha_j}{R}) \\
= (x_jx_i - q_{ji}x_ix_j) + \blackhole{F}{\alpha_i + \alpha_j}{R},
\end{multline*}
lo que, en vista de \eqref{neweq2}, nos da
\begin{equation}\label{neweq4}
x_jx_i - q_{ji}x_ix_j = \sum_{\gamma  \in
\Gamma_{ij}}a_{\gamma}\mono{x}{\gamma},
\end{equation}
donde $\Gamma_{ij}$ es un subconjunto finito de $\BbbN^s$ tal que
$\gamma M \prec \alpha_i + \alpha_j$ para todo $\gamma \in
\Gamma_{ij}$. Sea $\preceq'$ el orden admisible sobre $\BbbN^s$
definido por
\begin{equation}\label{ordenM}
\gamma \preceq' \mu \iff \begin{cases}
  \gamma M \prec \mu M & \text{o} \\
  \gamma M = \mu M \quad \text{y} \quad \gamma \leq_{\textrm{lex}} \mu
                        \end{cases}
\end{equation}
Dado que $\alpha_i = \epsilon_iM$ para todo $i = 1, \dots, s$, las
relaciones \eqref{neweq3} y \eqref{neweq4} se escriben
\begin{equation}\label{cadai}
x_ia-a^{(i)}x_i = \sum_{\gamma \prec' \epsilon_i \atop \gamma \in
\Gamma_i }a_{\gamma} \mono{x}{\gamma}
\end{equation}
y
\begin{equation}\label{cadaij}
x_jx_i - q_{ji}x_ix_j = \sum_{\gamma \prec' \epsilon_i +
\epsilon_j \atop \gamma \in \Gamma_{ij}}a_{\gamma}\mono{x}{\gamma}
\end{equation}
Sea $C = \{0\} \cup \big( \bigcup_{1 \leq i \leq s} C_i \big) \cup
\big( \bigcup_{1 \leq i < j \leq s} C_{ij} \big)$, donde $C_i =
\Gamma_i - \epsilon_i$ y $C_{ij} = \Gamma_{ij} - \epsilon_i -
\epsilon_j$. Es claro que $C$ es un subconjunto finito de
$\BbbZ^s$ cuyo m\'{a}ximo con respecto de $\preceq'$ es $0$. Por
\cite[Corollary 2.2]{Bueso/Gomez/Lobillo:1999unp}, existe
$\mathbf{w} = (w_1, \dots, w_s) \in \BbbN^n_+$ tal que
$\esc{\mathbf{w}}{\alpha} < 0$ para todo $\alpha \in C$. Esto
implica que las relaciones \eqref{cadai} y \eqref{cadaij} se
pueden escribir como
\begin{equation}\label{cadaiw}
x_ia - a^{(i)}x_i = \sum_{\esc{\mathbf{w}}{\gamma} <
w_i}a_{\gamma}\mono{x}{\gamma}
\end{equation}
y
\begin{equation}\label{cadaijw}
x_jx_i - q_{ji}x_ix_j = \sum_{\esc{\mathbf{w}}{\gamma} < w_i +
w_j}a_{\gamma}\mono{x}{\gamma}
\end{equation}
Por la Proposici\'{o}n \ref{esfiltracion}, tomando $\pcomp = 0$ y $
\scomp = \esc{\mathbf{w}}{-}: \BbbN^s \rightarrow \BbbN$,  podemos
dotar a $R$ de la filtraci\'{o}n $\{ R_n ~|~ n \in \BbbN \}$ dada por
\begin{equation}\label{filtro}
R_n = \sum_{\esc{\mathbf{w}}{\gamma} \leqslant
n}\Lambda\mono{x}{\gamma}
\end{equation}
Por el Corolario \ref{basesOre},
\[
gr(R) \cong \domain[y_1; \sigma_1]\cdots[y_s;\sigma_s]
\]
\end{proof}

\begin{corollary}\label{regular}
Supongamos que $R$ est\'{a} en las condiciones del Teorema
\ref{eneauno}. Supongamos, adem\'{a}s, que $q_{ji}$ es una unidad para
$1 \leqslant i < j \leqslant s$ y que $\sigma_i$ es un
automorfismo sobre $\domain$ para $i=1, \dots, s$. Si $\Lambda$ es
regular Auslander entonces $R$ es regular Auslander.
\end{corollary}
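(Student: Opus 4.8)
El plan es reducir el enunciado, mediante el Teorema \ref{eneauno}, al caso de una extensi\'{o}n iterada de Ore por automorfismos sobre un anillo regular Auslander, y despu\'{e}s aplicar los teoremas cl\'{a}sicos de elevaci\'{o}n de la regularidad Auslander a trav\'{e}s de extensiones de Ore y de filtraciones positivas. Primero observo que $R$ est\'{a} en las condiciones del Teorema \ref{eneauno}, luego existe una $\BbbN$--filtraci\'{o}n $\{R_n ~|~ n \in \BbbN\}$ sobre $R$ con $R_0 = \domain$, cada $R_n$ finitamente generado como $\domain$--m\'{o}dulo por la izquierda, y $gr(R) \cong \domain[y_1;\sigma_1]\cdots[y_s;\sigma_s]$. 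Adem\'{a}s, al ser $\domain$ regular Auslander, $\domain$ es noetheriano por la izquierda y de dimensi\'{o}n global finita.

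El paso central consiste en comprobar que la extensi\'{o}n iterada $\domain[y_1;\sigma_1]\cdots[y_s;\sigma_s]$ es \emph{por automorfismos}. Procedo por inducci\'{o}n, escribiendo $\domain_k = \domain[y_1;\sigma_1]\cdots[y_k;\sigma_k]$, con $\domain_0 = \domain$. De la construcci\'{o}n del Teorema \ref{eneauno} (cf.\ tambi\'{e}n el Corolario \ref{basesOre}) resulta que $\sigma_{k+1}$ restringe a un automorfismo de $\domain$ (la hip\'{o}tesis) y que $\sigma_{k+1}(y_i) = q_{k+1,i}y_i$ para $1 \leqslant i \leqslant k$, usando la relaci\'{o}n $y_{k+1}y_i = q_{k+1,i}y_iy_{k+1}$ y que $y_{k+1}$ no es divisor de cero en $\domain_{k+1}$. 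Como $\domain_k$ es libre sobre $\domain$ con base los monomios est\'{a}ndar $\mono{y}{\gamma}$, $\gamma \in \BbbN^k$, y $\sigma_{k+1}$ es homomorfismo de anillos, se obtiene $\sigma_{k+1}(\mono{y}{\gamma}) = u_{\gamma}\mono{y}{\gamma}$ para cierto $u_{\gamma} \in \domain$ que es producto de potencias $\sigma$--torcidas de las unidades $q_{k+1,i}$; luego $u_{\gamma}$ es una unidad y $\sigma_{k+1}$ env\'{\i}a la $\domain$--base $\{\mono{y}{\gamma}\}$ en otra $\domain$--base. Dado que $\sigma_{k+1}|_{\domain}$ es biyectivo, concluyo que $\sigma_{k+1}$ es un automorfismo de $\domain_k$. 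Observo tambi\'{e}n que cada $\domain_k$ es noetheriano por la izquierda, por ser una extensi\'{o}n iterada de Ore de un anillo noetheriano por la izquierda.

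A continuaci\'{o}n invoco el resultado cl\'{a}sico de que si $\Lambda$ es noetheriano por la izquierda y regular Auslander y $\sigma$ es un automorfismo de $\Lambda$, entonces $\Lambda[y;\sigma]$ es regular Auslander con $\mathrm{gldim}(\Lambda[y;\sigma]) = \mathrm{gldim}(\Lambda) + 1$ (v\'{e}ase \cite{Ekstrom:1989}, o bien \cite{Levasseur:1992}). Aplic\'{a}ndolo $s$ veces a la torre $\domain = \domain_0 \subseteq \domain_1 \subseteq \cdots \subseteq \domain_s = gr(R)$, obtengo que $gr(R)$ es regular Auslander, noetheriano por la izquierda y de dimensi\'{o}n global $\mathrm{gldim}(\domain) + s < \infty$. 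Finalmente, como la $\BbbN$--filtraci\'{o}n $\{R_n\}$ es positiva y exhaustiva, $R_0 = \domain$ es noetheriano y $gr(R)$ es noetheriano, el teorema de elevaci\'{o}n de propiedades homol\'{o}gicas de anillos filtrados garantiza que la regularidad Auslander de $gr(R)$ se transfiere a $R$ (v\'{e}ase \cite{Bjork:1989} o \cite{Levasseur:1992}); esto concluir\'{\i}a la demostraci\'{o}n.

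Preveo que el obst\'{a}culo principal ser\'{a} el paso central: justificar con cuidado la sobreyectividad de $\sigma_{k+1}$ sobre $\domain_k$ (el car\'{a}cter de endomorfismo inyectivo ya viene dado por la estructura de extensi\'{o}n de Ore), para lo cual hay que describir expl\'{\i}citamente la acci\'{o}n de $\sigma_{k+1}$ sobre la base de monomios est\'{a}ndar y usar de forma esencial que las $q_{k+1,i}$ son unidades y que $\sigma_{k+1}|_{\domain}$ es biyectivo. Verificado esto, el resto es aplicaci\'{o}n directa de resultados conocidos, y la \'{u}nica cautela adicional es comprobar que la filtraci\'{o}n positiva construida en el Teorema \ref{eneauno} cumple las hip\'{o}tesis precisas del teorema de elevaci\'{o}n citado.
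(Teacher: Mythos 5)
Tu demostraci\'{o}n es correcta y sigue esencialmente el mismo camino que la del art\'{\i}culo: primero se obtiene la regularidad Auslander de $gr(R) \cong \domain[y_1;\sigma_1]\cdots[y_s;\sigma_s]$ aplicando \cite[Theorem 4.2]{Ekstrom:1989} a la torre de extensiones de Ore, y despu\'{e}s se transfiere a $R$ mediante \cite[Theorem 3.9]{Bjork:1989} usando la $\BbbN$--filtraci\'{o}n del Teorema \ref{eneauno}. La \'{u}nica diferencia es que t\'{u} detallas expl\'{\i}citamente (y correctamente) que cada $\sigma_{k+1}$ se extiende a un automorfismo de la sub\'{a}lgebra intermedia $\domain[y_1;\sigma_1]\cdots[y_k;\sigma_k]$ usando que los $q_{ji}$ son unidades, verificaci\'{o}n que el art\'{\i}culo deja impl\'{\i}cita en la cita a Ekstr\"om.
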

\begin{proof}
Si $\Lambda$ es regular Auslander, entonces, por \cite[Theorem
4.2]{Ekstrom:1989}, $gr(R) \cong \domain[y_1;\sigma_1] \cdots
[y_s;\sigma_s]$ es regular Auslander. El resultado se sigue ahora
de \cite[Theorem 3.9]{Bjork:1989}
\end{proof}

\begin{theorem}\label{main}
Sea $R$ un \'{a}lgebra sobre un cuerpo $\mathbf{k}$ dotada de una
$(\BbbN^n,\preceq)$--filtraci\'{o}n $\mathcal{F} = \{F_{\alpha}(R) ~|~
\alpha \in \BbbN^n \}$ verificando las hip\'{o}tesis del Teorema
\ref{eneauno}. Supongamos, adem\'{a}s, que
\begin{enumerate}
\item Los escalares $q_{ji}$ son unidades y los endomorfismos
$\sigma_i : \domain \rightarrow \domain$ son automorfismos.
\item $\domain$ est\'{a} generado por elementos $z_1, \dots, z_t$
tales que la filtraci\'{o}n est\'{a}ndar $\domain_n$ obtenida al asignar
grado $1$ a cada $z_i$ satisface que $gr(\domain)= \oplus_{n
\geqslant 0}\domain_n/\domain_{n-1}$ es un \'{a}lgebra finitamente
presentada y noetheriana.
\item $\sigma_i(\domain_1) \subseteq \domain_1$, para $i = 1, \dots, s$.
\item $gr(\domain)$ o bien
$\domain[y_1;\sigma_1]\cdots[y_s;\sigma_s]$ es un \'{a}lgebra regular
Auslander y Cohen-Macaulay.
\end{enumerate}
Entonces $R$ es un \'{a}lgebra regular Auslander y Cohen-Macaulay.
\end{theorem}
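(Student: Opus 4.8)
El plan es construir una nueva filtraci\'on de $R$, ahora \emph{de dimensi\'on finita} sobre $\mathbf{k}$, superponiendo a la re-filtraci\'on del Teorema~\ref{eneauno} la filtraci\'on est\'andar de $\domain$, y reducir el enunciado al caso graduado. Como las hip\'otesis del Teorema~\ref{eneauno} se cumplen, $R$ recibe una $\BbbN$--filtraci\'on $\{R_n ~|~ n\in\BbbN\}$ con $R_0=\domain$, con cada $R_n$ finitamente generado como $\domain$--m\'odulo por la izquierda y con $gr(R)\cong S$, donde $S:=\domain[y_1;\sigma_1]\cdots[y_s;\sigma_s]$; de su demostraci\'on se extrae adem\'as que $\{\mono{x}{\gamma} ~|~ \gamma\in\BbbN^s\}$ es una $\domain$--base de $R$ y que, para cierto $\mathbf{w}\in\BbbN^s$ de componentes positivas,
\[
x_ia-\sigma_i(a)x_i\in\sum_{\esc{\mathbf{w}}{\gamma}<w_i}\domain\mono{x}{\gamma},\qquad x_jx_i-q_{ji}x_ix_j\in\sum_{\esc{\mathbf{w}}{\gamma}<w_i+w_j}\domain\mono{x}{\gamma}.
\]
Por la condici\'on (1), los $\sigma_i$ son automorfismos y los $q_{ji}$ unidades.

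El coraz\'on del argumento es el refinamiento que sigue. La condici\'on (2) dota a $\domain$ de la filtraci\'on est\'andar $\{\domain_\ell\}$, de piezas finito-dimensionales y con $gr(\domain)$ finitamente presentada y noetheriana; la condici\'on (3) da $\sigma_i(\domain_1)\subseteq\domain_1$ y, siendo $\domain_1$ finito-dimensional, cada $\sigma_i$ respeta toda la filtraci\'on e induce un automorfismo graduado $\bar\sigma_i$ de $gr(\domain)$. Asignando peso $1$ a cada generador $z_k$ de $\domain$ y peso $W_i=\lambda w_i$ a cada $x_i$, con $\lambda$ mayor que los $\domain$--grados de los coeficientes que figuran en las relaciones anteriores (y que $\deg_{\domain}q_{ji}$), se obtiene la filtraci\'on est\'andar de \'algebra $F'=\{R'_N ~|~ N\in\BbbN\}$ de $R$, de piezas finito-dimensionales. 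Para $\lambda$ grande los t\'erminos de error de esas relaciones caen en grado estrictamente menor, de modo que las formas iniciales de las relaciones de definici\'on de $R$ (las de $\domain$, que presentan $gr(\domain)$, m\'as las dos familias de arriba depuradas de sus errores) reproducen las de un \'algebra $gr(\domain)[y_1;\bar\sigma_1]\cdots[y_s;\bar\sigma_s]$; un argumento de tipo Diamond Lemma --o bien la maquinaria de la Secci\'on~3 (Teoremas~\ref{relacionesgr} y~\ref{bases}, Corolario~\ref{basesOre}) aplicada a $R$ como extensi\'on de la base filtrada $(\domain,\{\domain_\ell\})$-- muestra que no surgen relaciones espurias, as\'i que
\[
gr_{F'}(R)\cong gr(\domain)[y_1;\bar\sigma_1]\cdots[y_s;\bar\sigma_s]=:S'.
\]
Dicho de otro modo, $S'$ es el graduado asociado de $S=gr_F(R)$ respecto de la prolongaci\'on natural de $\{\domain_\ell\}$; es noetheriano y finitamente presentado por serlo $gr(\domain)$.

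Resta ver que $R$ es regular Auslander y Cohen-Macaulay, levantando estas propiedades desde un graduado asociado adecuado seg\'un las dos ramas de la condici\'on (4). Si $gr(\domain)$ es regular Auslander y Cohen-Macaulay, entonces $S'$ es regular Auslander por \cite[Theorem~4.2]{Ekstrom:1989} (los $\bar\sigma_i$ son automorfismos, los $q_{ji}$ unidades) y es Cohen-Macaulay porque adjuntar variables de Ore mediante automorfismos preserva la propiedad Cohen-Macaulay (v\'ease \cite{Ekstrom:1989} y \cite{Bjork:1989}); como $F'$ es finito-dimensional con $gr_{F'}(R)\cong S'$, el Teorema de la Base de Hilbert hace a $R$ noetheriano, \cite[Theorem~3.9]{Bjork:1989} lo hace regular Auslander, y la exactitud de $\GKdim$ para filtraciones finito-dimensionales (\cite[Theorem~2.8]{Gomez:1999}), junto con $j_R(M)=j_{S'}(gr(M))$ para $M$ finitamente generado, da la propiedad Cohen-Macaulay. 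Si, en cambio, es $S=\domain[y_1;\sigma_1]\cdots[y_s;\sigma_s]$ el regular Auslander y Cohen-Macaulay, entonces $gr_F(R)\cong S$ es regular Auslander, luego $R$ lo es por \cite[Theorem~3.9]{Bjork:1989} aplicado a $\{R_n\}$, igual que en el Corolario~\ref{regular}; para la propiedad Cohen-Macaulay se desciende $S\rightsquigarrow\domain$ a lo largo de la sucesi\'on regular normalizante $(y_1,\dots,y_s)$ --de donde $\domain$ resulta regular Auslander y Cohen-Macaulay-- y se levanta de nuevo a $R$, bien v\'ia $\{R_n\}$ (usando que $R_n$ es finitamente generado sobre el \'algebra Cohen-Macaulay $\domain$), bien v\'ia la filtraci\'on finito-dimensional $F'$.

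El principal escollo es el segundo paso: comprobar con rigor que la filtraci\'on est\'andar de $\domain$ se superpone a la del Teorema~\ref{eneauno} en una \'unica filtraci\'on finito-dimensional de $R$ con graduado asociado exactamente $gr(\domain)[y_1;\bar\sigma_1]\cdots[y_s;\bar\sigma_s]$, lo cual usa de forma esencial las condiciones (2) y (3) y el control del lugar que ocupan los escalares $q_{ji}$ (que en las aplicaciones de inter\'es, como $U_q(C)$, son escalares de $\mathbf{k}$). M\'as delicado a\'un es el \'ultimo levantamiento Cohen-Macaulay en la segunda rama de (4).
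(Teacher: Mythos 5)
Tu estrategia es genuinamente distinta de la del art\'{\i}culo, y el paso que t\'{u} mismo se\~{n}alas como \emph{el principal escollo} es, en efecto, un hueco real y no una mera verificaci\'{o}n rutinaria. Para construir tu filtraci\'{o}n finito-dimensional $F'$ aplicando la Proposici\'{o}n \ref{esfiltracion} a $\domain \subseteq R$ con $\domain$ dotado de su filtraci\'{o}n est\'{a}ndar, necesitas comprobar (EA3), es decir, $x_i\domain_{\ell} \subseteq \domain_{\ell}x_i + \sum \domain_{\gamma_1}\mono{x}{\gamma_2}$ con $\gamma_1 + \lambda\esc{\mathbf{w}}{\gamma_2} < \ell + \lambda w_i$; pero los coeficientes de error que aparecen al cruzar $x_i$ con un producto de $\ell$ generadores $z_k$ crecen con $\ell$ (cada reescritura de un t\'{e}rmino de cola genera nuevas colas), y hace falta una cota de su $\domain$--grado suficientemente uniforme para que un \'{u}nico $\lambda$ sirva para todos los $\ell$. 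No estableces esa cota, y es precisamente para evitarla que el art\'{\i}culo sigue otra ruta: conserva la filtraci\'{o}n $\{R_n\}$ del Teorema \ref{eneauno} (de piezas $\domain$--finitamente generadas, no finito-dimensionales), observa que la hip\'{o}tesis (3) da las inclusiones exactas $y_i\domain_{\ell} \subseteq \domain_{\ell}y_i$ dentro de $gr(R)$ --sin t\'{e}rminos de error, con lo que la Proposici\'{o}n \ref{esfiltracion} y el Corolario \ref{basesOre} se aplican sin fricci\'{o}n-- y filtra por segunda vez $gr(R)$, obteniendo $gr(gr(R)) \cong gr(\domain)[y_1;\sigma_1]\cdots[y_s;\sigma_s]$. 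El traslado de $\GKdim$ de $gr(M)$ a $M$ se hace entonces con \cite[Theorem 1.3]{McConnell/Stafford:1989}, resultado dise\~{n}ado exactamente para esta situaci\'{o}n de dos niveles y cuyas hip\'{o}tesis son lo que las condiciones (2) y (3) garantizan; esta pieza externa, que sustituye a tu re-filtraci\'{o}n no demostrada, falta por completo en tu propuesta.

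Dos observaciones m\'{a}s. En la rama en que $gr(\domain)$ es Cohen-Macaulay, el paso a $gr(\domain)[y_1;\sigma_1]\cdots[y_s;\sigma_s]$ no es un hecho gen\'{e}rico de preservaci\'{o}n de Cohen-Macaulay por extensiones de Ore extra\'{\i}ble de \cite{Ekstrom:1989} o \cite{Bjork:1989}; el art\'{\i}culo invoca para ello \cite[Lemma]{Levasseur/Stafford:1993}, y luego usa que la segunda filtraci\'{o}n (la de $gr(R)$) s\'{\i} es finito-dimensional para bajar a $gr(R)$. Y en la rama en que el \'{a}lgebra Cohen-Macaulay es $\domain[y_1;\sigma_1]\cdots[y_s;\sigma_s]$, tu rodeo (descender a $\domain$ por la sucesi\'{o}n normalizante y volver a subir a $R$ v\'{\i}a $\{R_n\}$) es innecesario y adem\'{a}s injustificado: la propiedad Cohen-Macaulay no se levanta a lo largo de una filtraci\'{o}n de piezas meramente $\domain$--finitas sin algo como McConnell-Stafford; el art\'{\i}culo combina directamente $\GKdim(M) = \GKdim(gr(M))$ con $j_R(M) = j_{gr(R)}(gr(M))$ (de la demostraci\'{o}n de \cite[Theorem 3.9]{Bjork:1989}) para transferir Cohen-Macaulay de $gr(R)$ a $R$. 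La parte de regularidad Auslander de tu argumento (Ekstr\"{o}m m\'{a}s Bj\"{o}rk) s\'{\i} coincide en esencia con la del art\'{\i}culo.
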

\begin{proof}
Sea $R_n$ la filtraci\'{o}n sobre $R$ proporcionada por el Teorema
\ref{eneauno} con $gr(R) = \domain
[y_1;\sigma_1]\cdots[y_s;\sigma_s]$. Como $\sigma_i(\domain_1)
\subseteq \domain_1$ para todo $i = 1, \dots, s$ y la filtraci\'{o}n
sobre $\domain$ es est\'{a}ndar, obtenemos que $y_i\domain_n \subseteq
\domain_ny_i$ para todo $i = 1, \dots, s$ y todo $n \geqslant 0$.
Por tanto, $\domain \subseteq \domain[y_1;\sigma_1] \cdots
[y_s;\sigma_s]$ es una extensi\'{o}n $(\pcomp, \scomp)$--acotada,
donde $\pcomp : \BbbN \rightarrow \BbbN$ es la identidad y $\scomp
: \BbbN^s \rightarrow \BbbN$ est\'{a} dado por $\scomp (\alpha) =
\esc{\mathbf{w}}{\alpha}$, para $\mathbf{w} = (w_1, \dots, w_s)$
con $w_i = \deg(y_i)$, $i= 1, \dots, s$. Por la Proposici\'{o}n
\ref{esfiltracion}, tenemos una filtraci\'{o}n sobre $gr(R) =
\domain[y_1,\sigma_1]\cdots[y_s;\sigma_s]$ dada por $gr(R)_{(n)} =
\sum_{i+\esc{\mathbf{w}}{\alpha}\leqslant
n}\domain_i\mono{y}{\alpha}$, cuyo graduado asociado es, por el
Corolario \ref{basesOre}, $gr(gr(R)) \cong
gr(\domain)[y_1;\sigma_1]\cdots[y_s;\sigma_s]$. Aqu\'{\i}, $\sigma_i$
denota el automorfismo graduado inducido en $gr(\domain)$ por el
hom\'{o}nimo automorfismo filtrado de $\domain$. Como $gr(\domain)$ es
un \'{a}lgebra noetheriana y finitamente presentada, deducimos que
$gr(gr(R))$ conserva estas mismas propiedades. Por tanto, la
filtraci\'{o}n $R_n$ est\'{a} en las hip\'{o}tesis de \cite[Theorem
1.3]{McConnell/Stafford:1989}.  Dado un $R$--m\'{o}dulo por la
izquierda $M$ finitamente generado, lo dotamos de una filtraci\'{o}n
tal que $gr(M)$ sea finitamente generado. Por \cite[Theorem
1.3]{McConnell/Stafford:1989}, $\GKdim (M) = \GKdim (gr(M))$. En
particular, $\GKdim (R) = \GKdim (gr(R))$. Por otra parte, de la
demostraci\'{o}n de \cite[Theorem 3.9]{Bjork:1989} obtenemos que
$j_R(M) = j_{gr(R)}(gr(M))$. Si suponemos que $gr(R) = \domain
[y_1;\sigma_1]\cdots[y_s;\sigma_s]$ es Cohen-Macaulay, obtenemos
\begin{multline*}
\mathrm{GKdim}(R) = \mathrm{GKdim} (gr(R)) = \\ j_{gr(R)}(gr(M)) +
\GKdim (gr(M)) = j_R(M) + \GKdim (M),
\end{multline*}
con lo que $R$ resulta ser as\'{\i}mismo Cohen-Macaulay.

Por \'{u}ltimo, si suponemos que $gr(\domain)$ es Cohen-Macaulay,
entonces $gr(gr(R))$ est\'{a} en las hip\'{o}tesis de
\cite[Lemma]{Levasseur/Stafford:1993}, por lo que es
Cohen-Macaulay. Dado que la filtraci\'{o}n $gr(R)_{(n)}$ es
finito-dimensional, obtenemos que $gr(R)$ es Cohen-Macaulay, con
lo que el razonamiento anterior demuestra que $R$ es
Cohen-Macaulay.
\end{proof}

\begin{theorem}
El \'{a}lgebra envolvente cuantizada $U_q(C)$ sobre $\mathbb{C}(q)$ de
una matriz de Cartan $C$ es regular Auslander y Cohen-Macaulay.
\end{theorem}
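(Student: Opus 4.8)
El plan es deducir el resultado del Teorema \ref{main}, tomando como \'{a}lgebra base $\domain$ la sub\'{a}lgebra de Cartan $U^0 = \mathbb{C}(q)[K_1^{\pm 1}, \dots, K_l^{\pm 1}]$ de $U_q(C)$, con $l$ el rango de $C$. Primero recordar\'{e}, de \cite{DeConcini/Procesi:1993}, los ingredientes estructurales: fijada una expresi\'{o}n reducida del elemento m\'{a}s largo del grupo de Weyl se obtienen un orden convexo $\beta_1 < \cdots < \beta_N$ sobre las ra\'{\i}ces positivas y los vectores ra\'{\i}z $E_{\beta_1}, \dots, E_{\beta_N}$, $F_{\beta_1}, \dots, F_{\beta_N}$; la base PBW, junto con la descomposici\'{o}n triangular $U_q(C) = U^- \otimes U^0 \otimes U^+$, muestra que $U_q(C)$ es libre como $U^0$--m\'{o}dulo por la izquierda con base los monomios est\'{a}ndar $\mono{x}{\gamma}$, $\gamma \in \BbbN^{2N}$, en los $s = 2N$ vectores ra\'{\i}z, ordenados colocando primero los $F_{\beta_k}$ y luego los $E_{\beta_k}$. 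Adem\'{a}s $U^0$ es un anillo de polinomios de Laurent conmutativo, en particular noetheriano.

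El segundo paso es reconocer $U^0 \subseteq U_q(C)$ como una extensi\'{o}n $\preceq$--acotada por la izquierda (caso $m = 0$, $A = U^0$) de la Secci\'{o}n 3. Para ello tomar\'{e} el orden admisible $\preceq_2$ sobre $\BbbN^{2N}$ definido primero por el grado total ponderado por alturas, $\gamma \mapsto \sum_{k=1}^{N}\mathrm{ht}(\beta_k)(\gamma_k + \gamma_{N+k})$, y, en caso de empate, por el orden lexicogr\'{a}fico; es un orden admisible con secciones iniciales finitas. Con \'{e}l, las f\'{o}rmulas de Levendorskii--Soibelman para la reordenaci\'{o}n de vectores ra\'{\i}z (sus t\'{e}rminos de error involucran s\'{o}lo vectores ra\'{\i}z estrictamente intermedios, conservan el grado ponderado y descienden lexicogr\'{a}ficamente), las relaciones $E_iF_j - F_jE_i = \delta_{ij}(K_i - K_i^{-1})/(q_i - q_i^{-1})$ y sus an\'{a}logas para vectores ra\'{\i}z no simples (sus t\'{e}rminos de error tienen grado ponderado estrictamente menor: est\'{a}n en $U^0$ en el caso simple, y en general por los resultados estructurales de \cite{DeConcini/Procesi:1993}) se escriben como prescriben (EA4), mientras que $E_\beta K_j = \lambda K_j E_\beta$ y $F_\beta K_j = \lambda' K_j F_\beta$ con $\lambda, \lambda' \in \mathbb{C}(q)^{\times}$ dan (EA3) sin t\'{e}rmino de error. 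La Proposici\'{o}n \ref{esfiltracion} proporciona entonces una $(\BbbN^{2N}, \preceq)$--filtraci\'{o}n $\filext{F}$ sobre $U_q(C)$ con $\filext{F}_0(U_q(C)) = U^0$; como $\preceq_2$ tiene secciones iniciales finitas, cada $\filext{F}_{\alpha}(U_q(C))$ es finitamente generado sobre $U^0$. Siendo $\{\mono{x}{\gamma} ~|~ \gamma \in \BbbN^{2N}\}$ una $U^0$--base de $U_q(C)$, el Teorema \ref{bases} y el Corolario \ref{basesOre} dan
\[
\graded{\filext{F}}{U_q(C)} \cong U^0[y_1; \sigma_1] \cdots [y_{2N}; \sigma_{2N}],
\]
donde cada $\sigma_i$ escala diagonalmente los $K_j^{\pm 1}$ (en particular es un automorfismo de $U^0$) y $\sigma_j(y_i) = q_{ji}y_i$ con $q_{ji} \in \mathbb{C}(q)^{\times}$. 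Como \'{a}lgebra, este graduado es de la forma $\mathbb{C}(q)_Q[K_1^{\pm 1}, \dots, K_l^{\pm 1}, y_1, \dots, y_{2N}]$ considerada en la Proposici\'{o}n \ref{grCM} (con $Q$ la matriz multiplicativamente antisim\'{e}trica de escalares de $q$--conmutaci\'{o}n), luego es regular Auslander y Cohen--Macaulay.

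El tercer paso es verificar las hip\'{o}tesis de los Teoremas \ref{eneauno} y \ref{main}. Las de \ref{eneauno} son claras: $\domain = U^0$ es noetheriano, $y_jy_i = q_{ji}y_iy_j$ con $q_{ji} \in U^0$, y los $\filext{F}_{\alpha}(U_q(C))$ son $U^0$--finitamente generados. Para \ref{main}: (1) los $q_{ji}$ son unidades y los $\sigma_i$ automorfismos (escalados diagonales de los generadores); (2) $U^0$ est\'{a} generado por $K_1, \dots, K_l, K_1^{-1}, \dots, K_l^{-1}$ y para la filtraci\'{o}n est\'{a}ndar asociada $gr(U^0) \cong \mathbb{C}(q)[u_1, \dots, u_l, v_1, \dots, v_l]/(u_1v_1, \dots, u_lv_l)$, que es conmutativa, finitamente presentada y noetheriana; (3) cada $\sigma_i$ preserva $(U^0)_1$ por escalar diagonalmente los generadores; (4) $\domain[y_1; \sigma_1] \cdots [y_{2N}; \sigma_{2N}] = \graded{\filext{F}}{U_q(C)}$ es, seg\'{u}n el paso anterior, regular Auslander y Cohen--Macaulay. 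El Teorema \ref{main} concluye que $U_q(C)$ es regular Auslander y Cohen--Macaulay.

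El obst\'{a}culo principal est\'{a} en el segundo paso: extraer de \cite{DeConcini/Procesi:1993} la forma precisa de \emph{todas} las relaciones de reordenaci\'{o}n entre vectores ra\'{\i}z (no s\'{o}lo los simples) y, sobre todo, entre los $E_{\beta_i}$ y los $F_{\beta_j}$, y comprobar que el orden admisible elegido las encaja en (EA3)--(EA4) con secciones iniciales finitas; esto \'{u}ltimo es justamente lo que garantiza la hip\'{o}tesis (c) del Teorema \ref{eneauno} y, con ello, que $\graded{\filext{F}}{U_q(C)}$ sea exactamente el espacio af\'{\i}n cu\'{a}ntico localizado del Corolario \ref{basesOre}. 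Hecha esa verificaci\'{o}n, el resto es la comprobaci\'{o}n rutinaria de hip\'{o}tesis del tercer paso.
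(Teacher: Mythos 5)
Tu estrategia global coincide con la del art\'{\i}culo: reducir el enunciado al Teorema \ref{main} mediante una multi-filtraci\'{o}n sobre $U_q(C)$ cuyo graduado asociado sea un espacio af\'{\i}n cu\'{a}ntico con algunas variables invertidas, aplicar la Proposici\'{o}n \ref{grCM} a dicho graduado y comprobar las hip\'{o}tesis restantes sobre $F_0(U)$ (anillo de Laurent conmutativo cuyo graduado est\'{a}ndar es finitamente presentado y noetheriano). La diferencia est\'{a} en c\'{o}mo se obtiene esa multi-filtraci\'{o}n: el art\'{\i}culo la toma directamente de \cite[Proposition 10.1]{DeConcini/Procesi:1993}, mientras que t\'{u} intentas reconstruirla reconociendo $U^0 \subseteq U_q(C)$ como extensi\'{o}n $\preceq$--acotada y aplicando la Proposici\'{o}n \ref{esfiltracion}, el Teorema \ref{bases} y el Corolario \ref{basesOre}. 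Esa v\'{\i}a es coherente con el esp\'{\i}ritu de la Secci\'{o}n 3 y, de completarse, dar\'{\i}a una demostraci\'{o}n m\'{a}s autocontenida (y la hip\'{o}tesis (c) del Teorema \ref{eneauno} saldr\'{\i}a gratis de la finitud de las secciones iniciales del orden elegido).

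El problema es que el paso que t\'{u} mismo se\~{n}alas como <<obst\'{a}culo principal>> es exactamente el contenido de \cite[Proposition 10.1]{DeConcini/Procesi:1993} y queda sin demostrar en tu propuesta: verificar que \emph{todas} las relaciones de reordenaci\'{o}n entre vectores ra\'{\i}z --- las de Levendorskii--Soibelman entre los $E_{\beta}$ (y entre los $F_{\beta}$) y, sobre todo, los conmutadores mixtos $[E_{\beta_i},F_{\beta_j}]$ para ra\'{\i}ces no simples, cuyos t\'{e}rminos de error involucran elementos de $U^0$ multiplicados por monomios en vectores ra\'{\i}z de altura total menor --- encajan en (EA3)--(EA4) para el orden admisible concreto que propones. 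Sin esa verificaci\'{o}n (o sin citarla expl\'{\i}citamente), no est\'{a} justificado que $\filext{F}$ sea una filtraci\'{o}n ni que $\graded{\filext{F}}{U_q(C)}$ sea el espacio af\'{\i}n cu\'{a}ntico localizado, y todo lo que sigue queda en el aire. La salida m\'{a}s corta es la del art\'{\i}culo: invocar directamente el resultado de DeConcini y Procesi, que ya empaqueta la base PBW, las relaciones de reordenaci\'{o}n y el orden lexicogr\'{a}fico adecuado en una $(\BbbN^n,\preceq)$--filtraci\'{o}n con el graduado asociado requerido; el resto de tu comprobaci\'{o}n de hip\'{o}tesis (unidades $q_{ji}$, automorfismos $\sigma_i$, $gr(U^0)$ finitamente presentado y noetheriano) coincide con la del art\'{\i}culo y es correcta.
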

\begin{proof}
Seg\'{u}n \cite[Proposition 10.1]{DeConcini/Procesi:1993}, es posible
dotar a $U = U_q(C)$ de una $(\mathbb{N}^{n},\preceq)$--filtraci\'{o}n
$\{ F_{\alpha}(U) ~|~ \alpha \in \BbbN^n \}$ para cierto $n$ y
$\preceq$ un orden lexicogr\'{a}fico tal que el \'{a}lgebra multi-graduada
asociada $\graded{F}{U}$ es un espacio af\'{\i}n cu\'{a}ntico
multi-param\'{e}trico sobre $\mathbb{C}(q)$ con generadores
semi-conmutativos $z_1, \dots, z_t, y_1, \dots, y_s$ con $z_1,
\dots, z_t$ invertidos. Por la Proposici\'{o}n \ref{grCM},
$\graded{F}{U}$ es regular Auslander y Cohen-Macaulay. Adem\'{a}s,
$F_0(U) = \mathbb{C}(q)[z_1^{\pm 1},\dots,z_t^{\pm 1}]$, un anillo
de polinomios conmutativos de Laurent. Si dotamos a $F_0(U)$ de la
filtraci\'{o}n est\'{a}ndar obtenida al dar grado $1$ a $z_i^{\pm1}$ ($i =
1, \dots, t$), entonces $gr(F_0(U))$ es un factor del \'{a}lgebra de
polinomios en $2t$ variables con coeficientes en $\mathbb{C}(q)$.
Por tanto, es finitamente presentada y noetheriana. As\'{\i}, estamos
en las hip\'{o}tesis del Teorema \ref{main}, por lo que $U_q(C)$
resulta ser regular Auslander y Cohen-Macaulay.
\end{proof}

\providecommand{\bysame}{\leavevmode\hbox to3em{\hrulefill}\thinspace}
\providecommand{\MR}{\relax\ifhmode\unskip\space\fi MR }
\providecommand{\MRhref}[2]{%
  \href{http://www.ams.org/mathscinet-getitem?mr=#1}{#2}
}
\providecommand{\href}[2]{#2}

\end{document}